\newtheorem{prop}{Proposition}[section]
\newtheorem{lem}[prop]{Lemma}
\newtheorem{thm}[prop]{Theorem}
\newtheorem{cor}[prop]{Corollary}
\theoremstyle{definition}
\newtheorem{defn}[prop]{Definition}
\theoremstyle{remark}
\newtheorem*{rmk}{Remark}
\DeclareMathOperator*{\argmin}{arg\,min}
\DeclareMathOperator{\R}{\mathbb{R}}
\DeclareMathOperator{\E}{\mathbb{E}}
\DeclareMathOperator{\diag}{\textbf{diag}}
\DeclareMathOperator{\1}{\mathbbm{1}}
\DeclareMathOperator{\Tr}{\text{Tr}}
\begin{document}

	\title{From mean field games to the best reply strategy in a stochastic framework}
	\date{}
	\author[1,2]{Matt Barker}
    \affil[1]{Department of Mathematics, Imperial College London}
    \affil[2]{Grantham Institute, Imperial College London}
	\maketitle
    
\begin{abstract}
	This paper builds on the work of Degond, Herty and Liu in \cite{Degond2017} by considering $N$-player stochastic differential games. The control corresponding to a Nash equilibrium of such a game is approximated through model predictive control (MPC) techniques. In the case of a linear quadratic running-cost, considered here, the MPC method is shown to approximate the solution to the control problem by the best reply strategy (BRS) for the running cost. We then compare the MPC approach when taking the mean field limit with the popular mean field game (MFG) strategy. We find that our MPC approach reduces the two coupled PDEs to a single PDE, greatly increasing the simplicity and tractability of the original problem. We give two examples of applications of this approach to previous literature and conclude with future perspectives for this research.
\end{abstract}

\subsubsection*{Key words}
Mean field games, best reply strategy, stochastic differential games, model predictive control, linear-quadratic control

\subsubsection*{AMS subject classification}
Primary: 49N70, 35Q93, 91A13; Secondary: 93C20.

\subsubsection*{Acknowledgements}
Matt Barker would like to thank Pierre Degond, Mirabelle Mu\^uls and Michael Herty for their guidance and suggestions. Matt would also like to thank NERC and the Science and Solutions for a Changing Planet Doctoral Training Partnership at the Grantham Institute at Imperial College London for funding.

\newpage    
    
\section{Introduction}
    
	Mean field game (MFG) models were first proposed by Lasry and Lions \cite{Lasry2007,Lasry2006,Lasry2006a} and simultaneously by Huang, Caines and Malhamé  \cite{Huang2006,Huang2007,Huang2006a,Huang2007a,Huang2006b}. Their research follows from the previous work of Aumann \cite{Aumann1964} and related researchers \cite{Schmeidler1973,Mas-Colell1984} on systems with a continuum of agents. In the last decade, the field has grown considerably with research taking many different directions – from applications \cite{Lachapelle2011,Kizilkale2016,Huang2016}, to existence, uniqueness and regularity \cite{Carmona2016,Bardi2014,Cardaliaguet2015}, and to numerical analysis \cite{Achdou2016,Achdou2012,Achdou2013}. The aim of MFG models is to describe how populations of agents evolve over time due to their strategic interactions. The trajectories of agents are determined through the minimisation of a cost functional over long time horizons. This process of optimisation implicitly assumes agents consider the future evolution of the population for long time periods and are able to continuously change their control.

	For a large number of applications, such as firm behaviour, traffic and pedestrian dynamics, and other human-based optimisation processes, this type of decision-making strategy seems to be different to reality. It would be more likely that in such applications, agents fix their control over a short period of time, evolve their position and then update the control. In recent years, another model of agent interaction has been developed – that of the ‘best reply strategy’ (BRS). The BRS was used in \cite{Degond2014b} to describe agents whose strategies evolved on a faster time scale than their social configuration. It was applied, in \cite{Degond2014} and \cite{Degond2014a} respectively, to the evolution of wealth in conservative and non-conservative economies. In a later paper by Degond, Herty and Liu \cite{Degond2017}, it was shown that the BRS could be related to a rescaled mean field game model in the case of deterministic dynamics.\footnote{It should be noted that the rescaling doesn't necessarily preserve the nash equilibrium of the original MFG model, so while this relation is important in understanding the transition from MFG to BRS, the resulting BRS dynamics can't be expected to approximate the nash equilibria of the original MFG model in general.}

	This relationship was described by a discretisation of the MFG, using a method of control known as model predictive control (MPC) or receding horizon control. The MPC method is detailed in \cite{Herty2017PerformanceDynamics} and the references therein. In this paper we extend the work in \cite{Degond2017} to include idiosyncratic noise in individual dynamics and a terminal cost in the optimisation functional. In a similar manner to \cite{Degond2017}, figure \ref{fig:square} can explain how we relate the BRS to MFGs via the MPC approach. The description of the dynamics begins with an $N$-player stochastic differential game (top left box in figure \ref{fig:square}). We then have a choice of either using MPC to take us to the BRS for the $N$-player games (top right box) or we can take the number players to infinity to obtain the MFG (bottom left box). The two methods then converge to the mean field BRS (bottom right box) either by using MPC from the MFG or taking the limit as the number of players go to infinity from the BRS for the $N$-player game.

  \begin{figure} [h]
      \centering
      \includegraphics[width=0.5\textwidth]{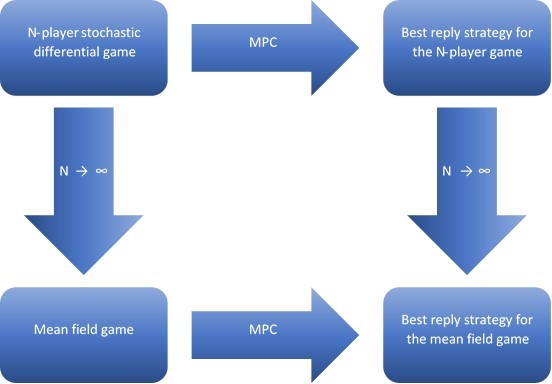}
      \caption{Schematic diagram describing the links between $N$-player games, maen field games and model predictive control.}
      \label{fig:square}
  \end{figure}

	This paper is laid out as follows. In Section 2 we describe the setup of the $N$-player stochastic differential game; we use two methods to explain how the MPC approach results in the BRS. We then take a mean field limit of the controlled $N$-player dynamics to obtain the mean field BRS. In Section 3 we develop the MFG related to the $N$-player stochastic differential game by taking the number of players to infinity. We then show how the MPC approach can be used to recover the mean field BRS from the MFG. In Section 4 we take a number of examples from the MFG and BRS literature and redesign them in the paradigm of this paper as examples for both how this approach might simplify numerical calculations and how the current BRS literature can be considered under the MFG paradigm. In Section 5 we summarise our results and explain future directions for research, both in terms of applications and theory. Finally, in the Appendix we define and discuss various notions of differentiability in the space of measures.

\section{Model predictive control of large interacting systems and the best reply strategy}

    \subsection{An N-player stochastic differential game} \label{sec:MPC}
    
    	Consider $N$ players labelled  by $i = 1, \ldots, N$. Each player has a state $X_i \in \R^d$, which they are controlling over a time horizon of $[0,T]$. Throughout the entire paper, $N$ denotes the number of players in the game, $d$ denotes the dimension of the state space, $i,j \in \{1, \ldots N\}$ denotes the $i$th or $j$th player, and finally $k \in \{1, \ldots d\}$ denotes the $k$th component of a player's state. We denote by $X_{j,k} \in \R$ the $k$th component of player $j$'s state, by $X_j = (X_{j,k})_k \in \R^d$ player j's state, by $X = (X_j)_j = ((X_{j,k})_k)_j \in \R^{Nd}$ the total state of the system and by $X_{-i} = (X_j)_{j \neq i} \in \R^{(N-1)d}$ the state of the system without player $i$. This convention will be similarly used for other variables or functions, unless otherwise stated.

      	We assume that each player's dynamics are influenced by the state of the entire system. This influence is interpreted as a function $f_i^{(N)}: \R^{Nd} \to \R^d$. We also assume that each player can control their dynamics with a control $u_i:[0,T] \to \R^d$. Finally, we include a randomness to the dynamics depending on time and a player's own position, this is given by the function $\sigma:[0,T] \times \R^d \to \diag(\R^d)$, where $\diag(\R^d)$ is the set of diagonal $d \times d$ matrices. Player $i$'s dynamics can therefore be summarised by  

      	\begin{equation}
          	\begin{aligned}
              	&dX_i(t) = (f_i^{(N)}(X(t)) + u_i(t))dt + \sigma(t,X_i(t))dB_i(t) \\
              	&X_i(0) = X_i^0 .
          	\end{aligned}
          	\label{eq:dynamics}
      	\end{equation}
     
      	Here, $B_i(t)$ are independent $d$-dimensional Wiener processes with $t \in [0,T]$ and the initial condition, $X_i^0$, are given iid random variables for all $i = 1, \ldots, N$. We assume that each player wants to minimise its own objective functional.
        
        \vspace{\parskip}
        
        \begin{defn} \label{def:BRS}
        	Given a set of admissable controls $\mathcal{A}$, from which players choose their strategies, we define the objective functional, $J_i: \mathcal{A}^N \to \R$ for player $i$ as the cost of player $i$'s trajectory when controls $u = (u_i)_i:[0,T] \to \R^{Nd}$ are used, i.e.

          	\begin{equation}
              	J_i(u) = \E \left[ \int_0^T \left( \frac{\alpha_i(s)}{2} |u_i(s)|^2 + h_i^{(N)}(X(s)) \right) ds  + g_i^{(N)}(X(T))\right]. \label{eq:objfun}
          	\end{equation}
		\end{defn}
            
		The objective functional's constituent parts are: the running cost of being in position $x$, given by $h_i^{(N)}(x)$; the terminal cost of ending up in position $x$ at the end of the control horizon, given by $g_i^{(N)}(x)$; and the running cost $\frac{\alpha_i(s)}{2} |u_i(s)|^2$ which is used to penalise the size of the control function. Therefore, each player is choosing a strategy to minimise this objective functional, as in \cite{Lasry2007}, this corresponds to a Nash equilibrium where no player can reduce their cost any further by changing their strategy only. We denote the optimal strategy for player $i$ by $u_i^*$. It is then given by the following minimisation problem
        
        \begin{equation}
        	J_i(u^*) = \min_{u_i \in \mathcal{A}} J_i(u_i, u_{-i}^*) \qquad \forall i = 1, \ldots, N. \label{eq:opt}
		\end{equation}
        
         We assume agents can choose controls from a certain set $\mathcal{A}$ of admissable controls. This set usually consists of progressively measurable controls with constraints on their smoothness and integrability. In the finite-player case, we assume that we are using closed-loop controls i.e. that for each $i$ there is a deterministic function $\phi_i:[0,T] \times \R^{Nd} \times \R^{Nd} \to \R$ such that
    
    \begin{equation}
    	u_i(t) = \phi_i(t,X_0,X_t).
    \end{equation}
         
        For a discussion on different sets of admissable controls as well as requirements on the various functions $f_i^{(N)}, h_i^{(N)},g_i^{(N)}, \alpha_i$, see Chapter 2 of \cite{Delarue2018}. As an example of such results, we can ensure existence of solutions to the SDE/optimisation problem \eqref{eq:dynamics} and \eqref{eq:opt} if the following hold:
        \begin{itemize}
             \item $f_i^{(N)}$ is Lipschitz, locally bounded and continuously differentiable and and $f_i^{(N)}(X(t))$ is $L^2$ bounded in time for any control $u_i$.
             \item $\sigma$ is Lipschitz, locally bounded and continuously differentiable in $x$ and $\sigma(t,X_i(t))$ is $L^2$ bounded in time for any control $u_i$.
             \item $g_i^{(N)}$ is locally bounded, continuously differentiable with a derivative that has at most linear growth and convex.
             \item $h_i^{(N)}$ is locally bounded and continuously differentiable with a derivative that has at most linear growth
             \item $\alpha_i(t) > 0$.
        \end{itemize} 
        These requirements guarantee the convexity of each optimisation problem and the existence of solutions to the SDE when using the optimal strategy.
    
    We also introduce the following definition of player $i$'s value functional. This is closely related to the objective functional \eqref{eq:objfun} and is used in the description of the optimal strategies
    
    \vspace{\parskip}

    \begin{defn}
       	We define the value functional, $V_i = V_i(t,x):[0,T] \times \R^{N d} \to \R$ for player $i$ as the cost of player $i$'s trajectory from time $t$ to $T$, with agents starting at position $x$, using their optimal controls. i.e.

       	\begin{equation}
           	V_i(t,x) = \E \left[ \int_t^T \left( \frac{\alpha_i(s)}{2} |u_i^*(s)|^2 + h_i^{(N)}(X(s)) \right) ds  + g_i^{(N)}(X(T))\right], \label{eq:obfun}
       	\end{equation}

       	where $X(s)$ solves \eqref{eq:dynamics} with control $u^*$ and initial condition $X(t) = x$. Note that in this definition $x \in \R^{Nd}$ is a deterministic initial condition. 
	\end{defn}
	
	As well as the optimal strategy $u^*_i$, this paper is interested in the potentially sub-optimal strategy, denoted by $\bar{u}_i$, know as the best reply strategy (BRS). The reason for interest in this is its ease of computation and that in certain situations it may be a better modelling paradigm for agent behaviour than a mean field game.
	
	\vspace{\parskip}
	
	\begin{defn}
	    For dynamics given by \eqref{eq:dynamics}, and an instantaneous cost function $\Phi_i(X(t))$, the best reply strategy is given by
	    
	    \begin{equation}
	        \bar{u}_i = - \nabla_{x_i} \Phi_i(X(t)) \, .
	    \end{equation}
	\end{defn}
        
    \subsection{Two methods for BRS dynamics of the N-player game}\label{sec:MPCdyn}
    
        We now use the MPC approach, as explained in \cite{Albi2015}, \cite{Degond2017} and \cite{Mayne1990}, to simplify the stochastic dynamic game model. This is done through two methods, the first discretises \eqref{eq:obfun} to find an approximate optimal control, while the second discretises the Hamilton-Jacobi-Bellman (HJB) equation related to \eqref{eq:obfun}. As will be seen at the end of this subsection, both methods yield the same BRS control and hence the choice is an arbitrary one to make. Under the MPC approach, we start by assuming that agents only control their behaviour in a piecewise constant manner i.e. $u_i = \sum_{l = 1}^n u_i^l \1_{[(l-i) \Delta t, l \Delta t)}(t)$, where $u_i^l \in \R^d$, and $\Delta t = \frac{T}{n}$, for some $n \in \mathbb{N}$. This reflects the idea that agents do not have continuous control over their dynamics, rather they choose a control for a short time horizon, their dynamics evolve and then they choose a new control. This leads us to

      	\begin{multline} \label{eq:optimisation_approx}
          	\E \left[ \int_0^T \left( \frac{\alpha_i(s)}{2} |u_i(s)|^2 + h_i^{(N)}(X(s)) \right) ds + g_i^{(N)}(X(T)) \right] \\
            = \sum_{l = 1}^n \E \left[ \int_{(l - 1) \Delta t}^{l \Delta t} \left( \frac{\alpha_i(s)}{2} |u_i|^2 + h_i^{(N)}(X(s)) \right) ds + \frac{\Delta t}{T} g_i^{(N)}(X(T)) \right] 
         \end{multline}
         
         We then assume that rather than optimising over the whole time period, each expectation inside the sum in \eqref{eq:optimisation_approx} is optimised at time $l \Delta t$, so
         
         \begin{multline}
         	\E \left[ \int_{(l - 1) \Delta t}^{l \Delta t} \left( \frac{\alpha_i(s)}{2} |u_i|^2 + h_i^{(N)}(X(s)) \right) ds + \frac{\Delta t}{T} g_i^{(N)}(X(T)) \right] \\
            \approx \E \left[ \int_{(l - 1) \Delta t}^{l \Delta t} \left( \frac{\alpha_i(s)}{2} |u_i|^2 + h_i^{(N)}(X(s)) \right) ds + \frac{\Delta t}{T} g_i^{(N)}(X(l \Delta t)) \right].
         \end{multline}

      	The second approximation is correct up to order $O(\Delta t)$.
      	
      	\begin{rmk}
            We are interested in the approximation of controls over each small time period of size $\Delta t$, hence why it is important that in each term the approximation is correct up to order $O(\Delta t)$. From a modelling perspective this would be appropriate in situations where the anticipation of agents is low relative to the length of the time horizon. Of course this will result in a sub-optimal control as the sum of the errors results in an error of order $O(1)$, therefore we cannot necessarily expect the resulting BRS control to approximate the Nash equilibrium control.
      	\end{rmk}
        
        As a result, we may restrict ourselves to the case of considering the dynamics \eqref{eq:dynamics} and control problem \eqref{eq:opt} on the time horizon $[t,t + \Delta t]$. In order for the cost \eqref{eq:obfun} to make sense over such short time horizons, we scale it by $\frac{1}{\Delta t}$. Therefore, under our paradigm, agents are optimising the following expectation over $u_i:\Omega \to \R^d$, where $\Omega$ is the underlying probability space of the SDE \eqref{eq:dynamics}.

        \begin{equation}
            V_i^{\Delta t} (t,x;u) = \E \left[  \int_t^{t + \Delta t} \left( \frac{\alpha_i(s)}{2 \Delta t} |u_i|^2 +\frac{1}{\Delta t} h_i^{(N)}(X(s)) \right) ds + \frac{1}{T} g_i^{(N)}(X(t + \Delta t)) \right]. \label{eq:tempMPCcost}
        \end{equation}

        Where $X(t) = x$ and players are using controls $u$.

    \subsubsection*{Method 1:}
    
    	Using a Riemann sum, specifically the end point quadrature rule, to approximate the integral \eqref{eq:tempMPCcost} up to order $O(\Delta t)$, we get
    
    	\begin{equation}
    		u_i^* = \argmin_{u_i: \Omega \to \R^d} \E  \left[ h_i^{(N)}(X(t + \Delta t)) + \frac{\alpha_i(t + \Delta t)}{2} |u_i|^2 + \frac{1}{T} g_i^{(N)}(X(t + \Delta t)) \right] \, , \qquad i = 1, \ldots, N.
    	\end{equation}
    
    	Using It\^o's formula for $h_i^{(N)}$ at time $t$ and the notation $D_{x_j} := (\partial_{x_{j,k}})_k$, $D_{x_j}^2 := (\partial_{x_{j,k}x_{j,l}})_{k,l}$ gives
    
    	\begin{multline}
    		d h_i^{(N)}(X(t)) = \\
            = \sum_{j = 1}^N \left( (f_j^{(N)}(X(t)) + u_j(t)) \cdot D_{x_j} h_i^{(N)}(X(t)) + \frac{1}{2} \Tr \left( \sigma^2(t,X_j(t)) D_{x_j}^2 h_i^{(N)}(X(t)) \right) \right) dt \\
        	+ \sum_{j = 1}^N \sigma(t,(X_j(t)) D_{x_j} h_i^{(N)}(X(t)) \cdot dB_j(t),
    	\end{multline}
    
    	and similarly for the It\^o expansion of $g_i^{(N)}(X(t + \Delta t))$. We then use an Euler-Maruyama discretisation, a simple extension of the Euler discretisation of an ODE to the setting of SDEs (for more information see \cite{Kloeden1992}), of the dynamics \eqref{eq:dynamics} on $(t,t + \Delta t)$ with an initial value $\bar{X} = X(t)$. We take $\Delta B_i(t) = B_i (t + \Delta t) - B_i(t)$, then take the expectation to get the following weak order $O(\Delta t)$ approximation of $u_i^*$.
    
    	\begin{equation}
        	\begin{aligned}
        		u_i^* =& \argmin_{u_i: \Omega \to \R^d} \E \left[ (h_i^{(N)} + \frac{1}{T} g_i^{(N)})(\bar{X}) + \sum_{j = 1}^N (f_j^{(N)}(\bar{X}) \cdot D_{x_j} (h_i^{(N)} + \frac{1}{T} g_i^{(N)})(\bar{X}) \Delta t \right]  \\
            	&+ \E \left[ \sum_{j = 1}^N \Tr \left( \sigma^2(t, \bar{X}_j) D_{x_j}^2 (h_i^{(N)} + \frac{1}{T} g_i^{(N)})(\bar{X}) \right) \Delta t \right] \\ 
                &+ \E \left[ \sum_{j = 1, \, j \neq i}^N u_j(t) \cdot D_{x_j} (h_i^{(N)} + \frac{1}{T} g_i^{(N)})(\bar{X}) \Delta t \right] \\
            	&+ \E \left[ u_i \cdot D_{x_i} (h_i^{(N)} + \frac{1}{T} g_i^{(N)})(\bar{X}) \Delta t + \frac{\alpha_i(t + \Delta t)}{2} |u_i|^2 \right].
        	\end{aligned}
        	\label{eq:discrete_dyn}
    	\end{equation}
    
    	Notice that only the final term in \eqref{eq:discrete_dyn} depends on $u_i$, so this can be simplified to
    
    	\begin{equation}
    		u_i^* = \argmin_{u_i: \Omega \to \R^d} \E \left[ \left( u_i \cdot D_{x_i} (h_i^{(N)} + \frac{1}{T} g_i^{(N)})(\bar{X}) + \frac{\alpha_i(t + \Delta t)}{2 \Delta t} |u_i|^2 \right) \right]. \label{eq:simplified_dyn}
    	\end{equation}
    
    	Note that, in general, for some functional $ \mathcal{F} = \mathcal{F}(\omega, u(\omega)): \Omega \times \R^d \to \R$ we have 
    
    	\begin{equation}
    		\min_{u(\omega): \Omega \to \R^d} \E [\mathcal{F}(\omega, u(\omega))] = \min_{u(\omega)}\int_{\Omega} \mathcal{F}(\omega, u(\omega)) dP(\omega).
    	\end{equation}
    
    	Now, suppose the function $u^*(\omega)$ satisfies $\mathcal{F}(\omega, u^*(\omega)) = min_{u \in \R} \mathcal{F}(\omega, u)$ for all $\omega$. Then for any other process $u(\omega)$, we necessarily have 
    
    	\begin{equation}
    		\mathcal{F}(\omega, u^*(\omega)) \leq \mathcal{F}(\omega, u(\omega)).
    	\end{equation}
    
    	Integrating over $\Omega$ and taking the minimum with respect to $u(\omega)$ gives
    
    	\begin{equation}
    		\min_{u: \Omega \to \R} \E [\mathcal{F}(\omega, u(\omega))] = \E[\mathcal{F}(\omega, u^*(\omega))]
    	\end{equation}
    
    	Thus, applying this reasoning to \eqref{eq:simplified_dyn}, it is clear that the expectation in \eqref{eq:simplified_dyn} will be minimised if for every $\omega \in \Omega$ the following expression is minimised
    
    	\begin{equation}
    	    \left( u_i(\omega) \cdot D_{x_i} (h_i^{(N)} + \frac{1}{T} g_i^{(N)})(\bar{X}) + \frac{\alpha_i(t + \Delta t)}{2 \Delta t} |u_i(\omega)|^2 \right)
    	\end{equation}
    
    	Now, we fix $\omega \in \Omega$ so that $u_i^* = u_i^*(\omega) \in \R$ is some constant in $\R$ to be found. We can use first order conditions to find $u_i^*$.  Approximating $\alpha_i(t + \Delta t)$ by a Taylor expansion of order $O(\Delta t)$ we get 
    
   		\begin{equation}
    		u_i^* = - \frac{\Delta t}{\alpha_i(t) + \Delta t \dot{\alpha}_i(t)} D_{x_i} (h_i^{(N)} + \frac{1}{T} g_i^{(N)})(\bar{X}).
    	\end{equation}
    	
    	If we were to take the limit $\Delta t \to 0$, we would get $u_i^* = - \frac{1}{\dot{\alpha}_i(t)} D_{x_i} (h_i^{(N)} + \frac{1}{T} g_i^{(N)})(X(t))$, however in many situations we may take $\alpha_i$ as constant, so this control would make no sense. To rectify this problem we have to rescale $\alpha_i$ by $\Delta t$, redefining $V_i^{\Delta t}$ as
    	
    	\begin{equation}
    	    V_i^{\Delta t}(t,x;u) = \E \left[  \int_t^{t + \Delta t} \left( \frac{\alpha_i(s)}{2} |u_i|^2 +\frac{1}{\Delta t} h_i^{(N)}(X(s)) \right) ds + \frac{1}{T} g_i^{(N)}(X(t + \Delta t)) \right]. \label{eq:MPCcost}
    	\end{equation}
    	
    	If we do this and go through the same process as above we get 
    	
    	\begin{equation}
    		u_i^* = - \frac{1}{\alpha_i(t) + \Delta t \dot{\alpha}_i(t)} D_{x_i} (h_i^{(N)} + \frac{1}{T} g_i^{(N)})(\bar{X}).
        	\label{eq:discrete_opt}
    	\end{equation}
    
    	This is known as the BRS, as described in \cite{Degond2014,Degond2014a,Degond2014b} and given by Definition \ref{def:BRS}. In simulations, the dynamics given by substituting \eqref{eq:discrete_opt} into the discretised version of \eqref{eq:dynamics} would be calculated and the new state $X(t + \Delta t)$ would be used to repeat the process. If we now let $\Delta t \to 0$, we get the following dynamics
    
    	\begin{equation}
    		\begin{aligned}
    			&dX_i(t) = \left( f_i^{(N)}(X(t)) - \frac{1}{\alpha_i(t)} D_{x_i} (h_i^{(N)} + \frac{1}{T} g_i^{(N)})(X(t)) \right) dt + \sigma(t, X_i(t)) d B_i(t) \label{eq:mpcdyn} \\
            	&X_i(0) = X_{i,0}.
        	\end{aligned}
    	\end{equation}

    \subsubsection*{Method 2:}
    
    	This method uses the well know fact (see \cite{Friedman1972} or \cite{Oksendal2007} for example) that $V_i(t,x)$ solves the following HJB equation for every $i = 1, \ldots, N$.
    
    	\begin{equation}
    		\begin{aligned}
    			\sup_{u_i \in \R^d} \bigg\{ \frac{\alpha_i(t)}{2} |u_i|^2 + h_i^{(N)}(x) + \partial_t V_i(t,x) +  \\
            	\sum_{j = 1, \, j \neq i}^N \left(f_j^{(N)}(x) + u_j^*\right) \cdot D_{x_j} V_i(t,x) + \left(f_i^{(N)}(x) + u_i\right) \cdot D_{x_i} V_i(t,x)  \\
            	+ \frac{1}{2} \sum_{j = 1}^N \Tr \left( \sigma^2(t,x_j) D_{x_j}^2 V_i(t,x) \right) \bigg\} = 0, \label{eq:HJB}
    		\end{aligned}
    	\end{equation}
    
    	with $V_i(T,x) = g_i^{(N)}(x) \, \forall x \in \R^{d \times N}$. Using first order conditions, we find that for every $i = 1, \ldots, N$
    
    	\begin{equation}
    		u_i^* = - \frac{1}{\alpha_i(t)} D_{x_i} V_i(t,x). \label{eq:optcontrol}
    	\end{equation}
    
    	Substituting the above into \eqref{eq:HJB}, we obtain the following HJB equation and optimal agent dynamics:
    
    	\begin{equation}
    		\begin{aligned}
    			\frac{1}{2\alpha_i(t)} |D_{x_i} V_i(t,x)|^2 &= h_i^{(N)}(x) + \partial_t V_i(t,x) + f_i^{(N)}(x) \cdot D_{x_i} V_i(t,x) \\ 
            	&+ \sum_{j = 1, \, j \neq i}^N \left(f_j^{(N)}(x) - \frac{1}{\alpha_j(t)} D_{x_j} V_j(t,x)\right) \cdot D_{x_j} V_i(t,x) \\ 
            	&+\frac{1}{2} \sum_{j = 1}^N \Tr \left( \sigma^2(t,x_j) D_{x_j}^2 V_i(t,x) \right)          
            	\label{eq:HJB2}
    		\end{aligned}
		\end{equation}
    
    	\begin{equation}        
        	dX_i(t) = \left( f_i^{(N)}(X(t) - \frac{1}{\alpha_i(t)} D_{x_i} V_i(t,X(t)) \right) dt + \sigma(t,X_i(t)) dB_i(t).
    	\end{equation}
    
    	Note that we still have the same terminal condition for the PDE for $V_i$ and initial condition for the SDE for $X_i$.
    
        Now, using the MPC approach, we actually want to consider $V_i^{\Delta t}$ rather than $V_i$. We discretise the analogue of (2.22) for $V_i^{\Delta t}$ (which is found by replacing $h_i^{(N)}$ by $\frac{h_i^{(N)}}{\Delta t}$) in the time direction only to get an order $O(\Delta t )$ approximation of $V_i^{\Delta t}$. This is a backward in time discretisation since we are given a terminal condition. This results in the following equation
    
    	\begin{equation}
    		\begin{aligned}
    			\frac{1}{2\alpha_i(t + \Delta t)} |D_{x_i} V_i^{\Delta t}(t + \Delta t,x)|^2 = \frac{h_i^{(N)}(x)}{\Delta t} + \frac{V_i^{\Delta t}(t + \Delta t,x) - V_i^{\Delta t}(t,x)}{\Delta t} \\
                + f_i^{(N)}(x) \cdot D_{x_i} V_i^{\Delta t}(t + \Delta t,x) \\ 
                + \sum_{j = 1, \, j \neq i}^N \left(f_j^{(N)}(x) - \frac{1}{\alpha_j(t)} D_{x_j} V_j^{\Delta t}(t + \Delta t,x)\right) \cdot D_{x_j} V_i^{\Delta t}(t + \Delta t,x) \\
            	+ \frac{1}{2} \sum_{j = 1}^N  \Tr \left( \sigma^2(t + \Delta t,x_j) D_{x_j}^2 V_i^{\Delta t}(t + \Delta t,x) \right) .
    		\end{aligned}    	
    	\end{equation}
    
    	Since we have $V_i^{\Delta t}(t + \Delta t,x) = \frac{1}{T} g_i^{(N)}(x)$ for all $x \in \R^N$, this yields an order $O(\Delta t)$ approximation $V_i^{\Delta t}(t,x) = (h_i^{(N)} + \frac{1}{T} g_i^{(N)})(x)$, which returns us to \eqref{eq:discrete_opt} and \eqref{eq:mpcdyn}. Thus we can conclude that the BRS can be derived from an MPC approach for stochastic differential games either through the discretisation of the value function or through the discretisation of the corresponding HJB equation.
    
    \subsection{Deriving the BRS dynamics for the limit $N \to \infty$} \label{sec:MPClim}
    
    	We now look at the limiting behaviour of equation \eqref{eq:mpcdyn} as $N \to \infty$. First, we shall make the following assumptions on the symmetry of  $f_i^{(N)}$, $h_i^{(N)}$ and $\alpha_i$, in order to pass to the limit as $N \to \infty$ in a coherent manner. Similar such assumptions are made throughout MFG literature, c.f. \cite{Blanchet2014,Cardaliaguet2010,Carmona2012,Carmona2012a,Degond2017,Huang2006b,Feleqi2013ThePlayers}. The assumptions are:
    
    	\begin{itemize}
   			\item [\textbf{A}] $f_i^{(N)}(x) = f(x_i,m_{-i}^N)$ for some $f:\R^d \times \mathcal{P}(\R^d) \to \R^d$, where $\mathcal{P}(\R)$ is the set of Borel probability measures on $\R^d$ and $m_{-i}^N = \frac{1}{N - 1} \sum_{j = 1, \, j \neq i}^N \delta_{x_j}$.
    
			\item [\textbf{B}] Using the same notation, $h_i^{(N)}(x) = h(x_i,m_{-i}^N)$ for some $h:\R^d \times \mathcal{P}(\R^d) \to \R$. As a direct consequence, $D_{x_i} h_i^{(N)}(x) = D_x h(x_i,m_{-i}^N)$, similarly for $g_i^{(N)}$.
        
        	\item[\textbf{C}] In order to ensure we can use symmetry arguments later in this paper, we require $\alpha_i(t) = \alpha(t)$ for all $i = 1, \ldots, N$.
        	
   		\end{itemize}
   
		Using assumptions \textbf{A}, \textbf{B} and \textbf{C}, equation \eqref{eq:mpcdyn} can be rewritten as
		\begin{equation}
			dX_i(t) = \left( f(X_i(t), M_{-i}^N(t)) - \frac{1}{\alpha(t)} D_x (h + \frac{1}{T} g)(X_i(t), M_{-i}^N(t)) \right) dt + \sigma(t, X_i(t)) dB_t^i.
		\end{equation}
   
		Here, $M_{-i}^N(t) = \frac{1}{N - 1} \sum_{j = 1, \, j \neq i}^N \delta_{X_j(t)}$. We also require growth assumptions to apply results relating to the limit of this SDE as $N \to \infty$ as well as for existence and uniqueness later. The assumptions made are
    
    	\begin{itemize}
        	\item [\textbf{D}] For every $i = 1, \ldots, N$ $X_i^0$, as defined in \eqref{eq:dynamics} are i.i.d. random variables with distribution $m_0 \in \mathcal{P}_2(\R^d)$.
        	
    		\item [\textbf{E}] There exists some constants $C_1 > 0$ such that for any $(x_1,m_1),(x_2,m_2) \in \R^d \times \mathcal{P}(\R^d)$, we have $|f(x_1,m_1) - f(x_2,m_2)| \leq C_1(|x_1 - x_2| + \textbf{d}_1(m_1,m_2))$.
        
        	\item [\textbf{F}] There exists some constant $C_2 > 0$ such that for any $(x_1,m_1),(x_2,m_2) \in \R^d \times \mathcal{P}(\R^d)$, we have $|D_x h(x_1,m_1) - D_x h(x_2,m_2)| \leq C_2(|x_1 - x_2| + \textbf{d}_1(m_1,m_2))$, and similarly for $D_x g$.
        
        	\item [\textbf{G}] There exists some constant $C_3 > 0$ such that for any $(t_1,x_1),(t_2,x_2) \in [0,T] \times \R^d$, we have $|\sigma(t_1,x_1) - \sigma(t_2,x_2)| \leq C_3(|t_1 - t_2| + |x_1 - x_2|)$.
    	\end{itemize}
    
    	Here, $\textbf{d}_1(m_1,m_2)$ is the $1$-Wasserstein distance, see Appendix \ref{sec:meas} for further information on the Wasserstein distance and the space of probability measures. Under the assumptions \textbf{D}, \textbf{E}, \textbf{F} and \textbf{G}, trivial modifications to theorem 1.3 in \cite{Jourdain2007} and theorem 1.4 in \cite{Sznitman1991} show that for every $i = 1, \ldots, N$, $X_i^{(N)}(t)$ converges as $N \to \infty$ (uniformly in $t$ and $i$) in any compact interval $[0,T]$ in the $L^2$ sense to the following McKean-Vlasov equation.
   
		\begin{equation}
    		\begin{aligned}
    			& dX_t = \left( f(X_t, m_t) - \frac{1}{\alpha(t)} D_x (h + \frac{1}{T} g)(X_t, m_t)) \right) dt + \sigma(t, X_t) dB(t), \label{eq:limit_dyn} \\
            	& X_{t = 0} = X_0
    		\end{aligned}
		\end{equation}
    
    	where $m_t$ is the law of $X_t$, $B_t$ is a $d$-dimensional Wiener process and $X_0$ is an $m_0$-distributed independent random variable. Such behaviour, of particles in large systems behaving independently of one another, is known as the propagation of chaos. Together, \cite{Sznitman1991} and \cite{Jourdain2007} effectively cover existence and uniqueness of such SDEs. In fact, they have shown that for such SDEs strong existence and uniqueness holds under the assumptions on $f$, $D_x h$, $D_x g$ and $\sigma$ already made. It is often also of interest to understand how the entire population of players develops over time. To do this, we will describe the evolution of the distribution of the population by analysing the weak form of a pde for measures. For more information on differentiability in the space of measures, see Appendix \ref{sec:meas}.
     
    	Let $\phi \in C_c^{\infty}([0,T] \times \R^d)$ be any test function, here $\phi \in C_c^{\infty}([0,T] \times \R^d)$ is the set of compactly supported, infinitely differentiable functions from $[0,T] \times \R^d$ to $\R$. By It\^o's formula we have
     
    	\begin{equation}
    	    \begin{aligned}
     		    \phi(t,X_t) & = \phi(0,X_0) \\
                & + \int_0^t \left[ \partial_s \phi(s,X_s) + D_x \phi(s,X_s) \cdot \left( f(X_s,m_s) - \frac{1}{\alpha(s)} D_x (h + \frac{1}{T} g)(X_s,m_s) \right) \right] ds \\
                & + \frac{1}{2} \int_0^t \Tr \left( \sigma^2(s,X_s) D_x^2 \phi(s,X_s) \right) ds + \sum_{k=1}^d \int_0^t \partial_{x_k}\phi(s,X_s) \sigma_{kk}(s,X_s) dB_k(s).
            \end{aligned}
    	\end{equation}
     
    	We can then take the expectation, and use the boundedness of $\phi$ together with the Lipschitz properties of the other functions to bring the expectation inside the integral via the dominated convergence theorem, so we have
     
    	\begin{equation}
    	    \begin{aligned}
			    \E[\phi(t,X_t)] & - \E[\phi(0,X_0)] - \int_0^t \E[\partial_s \phi(s,X_s)] = \\
			    & = \int_0^t \E  \left[ D_x \phi(s,X_s) \cdot \left( f(X_s,m_s) - \frac{1}{\alpha(s)} D_x (h + \frac{1}{T} g)(X_s,m_s) \right) \right] ds \\
			    & + \int_0^t \E \left[ \frac{1}{2} \Tr \left( \sigma^2(s,X_s) D_x^2 \phi(s,X_s) \right) \right] ds.		     	
    	    \end{aligned}
    	\end{equation}
    
    	Finally, we can use the definition of $m_s$ as the law of $X_s$ and evaluate the expectations. This gives the weak form of the following PDE that $m_t$ must satisfy
    
    	\begin{equation}
    		\begin{aligned}
    			& \partial_t m_t = D_x \cdot \left[ \left( \frac{1}{\alpha(t)} D_x (h + \frac{1}{T} g)(x,m_t) - f(x,m_t) \right) m_t \right] + \frac{1}{2} \Tr \left( D_x^2 (\sigma^2(t,x) m_t) \right) \\
            	& m_{t= 0} = m_0 \label{eq:measurePDE}
    		\end{aligned}
		\end{equation}
    
\section{Relating the BRS to MFGs} \label{sec:MFGMPC}
    
    The aim of this section is to demonstrate how the BRS is linked to MFGs and to begin demonstrating the extent of the simplification made in the BRS
    
    \subsection{Mean field limit of the HJB equation}
    
    	We will now analyse the limiting behaviour of the HJB equation \eqref{eq:HJB2}. To this end we require that assumptions \textbf{A}, \textbf{B},\textbf{C}, \textbf{D}, \textbf{E} and \textbf{F} all hold. In order to describe the limiting case, we need to consider a function $W = W(t,x,Y): [0,T] \times \R^d \times \R^{d(N - 1)} \to \R$, as in \cite{Degond2017}, which we may relate to $V_i$.
        
        \vspace{\parskip}
        
        \begin{lem} \label{lemma:mfg_limit_HJB}
        	Assume there exists a function $W = W(t,x,Y): [0,T] \times \R^d \times \R^{d(N - 1)} \to \R$ that is $C^1$ in $t$, $C^2$ in $x$ and $C^2$ in $Y$ and which satisfies 
            
            \begin{dgroup}
    		\begin{dmath}
    			\frac{1}{2 \alpha(t)} |D_x W(t,x,Y)|^2 = h(x,m_Y) + \partial_t W(t,x,Y) + f(x,m_Y) D_x W(t,x,Y) + \sum_{j = 1}^{N-1} \left(f(y_j,m_{Y_j^x}) - \frac{1}{\alpha(t)} D_{x} W(t,y_j,Y_j^x)\right) \cdot D_{y_j} W(t,x,Y) + \frac{1}{2} \sum_{j = 1}^{N-1} \Tr \left( \sigma^2(t,y_j) D_{y_j}^2 W(t,x,Y) \right) + \Tr \left( \sigma^2(t,x) D_x^2 W(t,x,Y) \right)
    		\end{dmath}
    
    		\begin{dmath}
    			W(T,x,Y) = g(x,m_Y),
    		\end{dmath}
    		\label{eq:symHJB}
    	\end{dgroup}
        
        where $m_Y = \frac{1}{N-1}\sum_{j=1}^{N-1} \delta_{y_j}$ and $Y_k^x = (y_1, \ldots,y_{k-1},x,y_{k+1},\ldots,y_{N-1})$. Then $V_i$, satisfying \eqref{eq:obfun} has a mean field limit $\textbf{W} (t,x,m): [0,T] \times \R^d \times \mathcal{P}(\R^d) \to \R$. That is, denoting $V_i$ by $V_i^N$ to emphasise the dependence on $N$, if $m_X \overset{*}{\rightharpoonup} m$ as $N \to \infty$ then $V_i^N(t,X) \to \textbf{W}(t,x_i,m)$ as $N \to \infty$. Furthermore, $\textbf{W}$ satisfies the following PDE in the weak sense
        
        \begin{equation}
    		\begin{aligned}
    			\frac{1}{2 \alpha(t)} |D_x \textbf{W}(t,x,m)|^2 =& h(x,m) + \partial_t \textbf{W}(t,x,m) \\ 
                &+ f(x,m) \cdot D_x \textbf{W}(t,x,m) + \frac{1}{2} \Tr \left( \sigma^2(t,x) D_x^2 \textbf{W}(t,x,m) \right)  \\
    			&+ \int_{\R^d} \left( f(y,m) - \frac{1}{\alpha(t)} D_x \textbf{W}(t,y,m) \right) \cdot \partial_m \textbf{W}(t,x,m)(y) m(dy) \\
    			&+ \frac{1}{2} \int_{\R^d} \Tr \left(  \sigma^2(t,y) D_y \left[\partial_{m} \textbf{W}(t,x,m)\right](y) \right) m(dy), \\ 
    	        \textbf{W}(T,x,m) = g(x). \qquad& 
    		\end{aligned} \label{eq:mfhjb}
    	\end{equation}        
        \end{lem}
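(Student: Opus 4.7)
The plan is to exploit the permutation symmetry induced by assumptions \textbf{A}, \textbf{B}, \textbf{C} to reduce the finite-player value function $V_i^N$ to $W(t,x_i,X_{-i})$ with $W$ symmetric in its last argument, then to define $\textbf{W}(t,x,m)$ as the limit of $W(t,x,Y)$ along any sequence with $m_Y \rightharpoonup m$, and finally to pass to the limit term by term in the symmetric HJB \eqref{eq:symHJB}. The engine of the argument is the translation of Euclidean $y_j$-derivatives of $W$ into measure-theoretic $\partial_m$-derivatives of $\textbf{W}$, as discussed in Appendix \ref{sec:meas}.

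First I would observe that under \textbf{A}, \textbf{B}, \textbf{C} the system \eqref{eq:HJB2} is invariant under permutations of the indices $j\neq i$, so that the hypothesised symmetric $W$ represents $V_i^N(t,X)=W(t,x_i,X_{-i})$. Assumptions \textbf{E}, \textbf{F}, \textbf{G} transfer through the HJB to Lipschitz dependence of $W$ on $Y$ in the $1$-Wasserstein sense, which lets one define $\textbf{W}(t,x,m)$ unambiguously as the limit of $W(t,x,Y^{(N)})$ for any sequence with $m_{Y^{(N)}}\rightharpoonup m$. The convergence $V_i^N(t,X)\to \textbf{W}(t,x_i,m)$ whenever $m_X\overset{*}{\rightharpoonup} m$ then follows because removing a single atom from a length-$N$ empirical measure has negligible effect as $N\to\infty$.

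For the PDE itself, the chain rule for measure derivatives applied to the ansatz $W(t,x,Y)\approx \textbf{W}(t,x,m_Y)$ yields
\begin{align*}
    D_{y_j} W(t,x,Y) &= \tfrac{1}{N-1}\,\partial_m \textbf{W}(t,x,m_Y)(y_j) + o(1/N),\\
    D_{y_j}^2 W(t,x,Y) &= \tfrac{1}{N-1}\,D_y\bigl[\partial_m \textbf{W}(t,x,m_Y)\bigr](y_j) + O(1/N^2).
\end{align*}
Substituting these into \eqref{eq:symHJB}, the sums $\sum_{j=1}^{N-1}$ acquire a factor $1/(N-1)$ and become Riemann-type averages over $m_Y$, which by the continuity granted by \textbf{E}, \textbf{F}, \textbf{G} and the induced regularity of the integrands converge to the corresponding integrals against $m(dy)$; crucially $D_x W(t,y_j,Y_j^x)\to D_x \textbf{W}(t,y_j,m)$ because replacing one atom in $m_Y$ by $\delta_x$ leaves the weak limit unchanged. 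The terms of \eqref{eq:symHJB} not tied to a sum, namely $h(x,m_Y)$, $\partial_t W$, $f\cdot D_x W$, $\tfrac{1}{2\alpha}|D_x W|^2$ and the single trace in $D_x^2 W$, pass directly to their $\textbf{W}$-counterparts, delivering \eqref{eq:mfhjb} in the weak sense, and the terminal condition $\textbf{W}(T,x,m)=g(x,m)$ follows trivially from $W(T,x,Y)=g(x,m_Y)$.

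The principal obstacle is justifying the two displayed expansions uniformly enough in $N$ to pass to the limit: this amounts to requiring that $\partial_m\textbf{W}(t,x,m)(y)$ exists with a Lipschitz $y$-derivative, a regularity which must either be postulated as part of the hypothesis on $\textbf{W}$ or deduced a posteriori from a well-posedness theory for \eqref{eq:mfhjb}. A secondary subtlety concerns the quadratic $|D_x W|^2$, which demands stability of the $x$-gradient under the mean-field limit; since only Euclidean $x$-derivatives appear, uniform $C^2_x$ regularity of $W$ transferred to $\textbf{W}$ via an Arzelà--Ascoli argument handles this. Because the identification is only claimed in the weak PDE sense, in practice I would pair \eqref{eq:symHJB} against a smooth test function in $(t,x)$ with $Y$ frozen at a convergent empirical realisation and apply dominated convergence, thereby turning the formal manipulations above into a rigorous proof.
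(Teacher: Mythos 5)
Your proposal follows essentially the same route as the paper: exploit symmetry to write $V_i^N(t,X)=W(t,x_i,X_{-i})$, factor $W$ through the empirical measure to define $\textbf{W}(t,x,m_Y)$, convert the $y_j$-derivatives into $\tfrac{1}{N-1}\partial_m\textbf{W}$ and $\tfrac{1}{N-1}D_y[\partial_m\textbf{W}]$ terms (the paper invokes a modification of Proposition 6.1 of Cardaliaguet--Delarue--Lasry--Lions for exactly these identities), and pass to the limit in each sum using that $m_{Y_j^x}$ and $m_Y$ share the same weak-$*$ limit. Your explicit flagging of the regularity needed for the measure-derivative expansions is a fair and slightly more candid account of the same step the paper delegates to the cited proposition.
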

        
        See Appendix \ref{sec:meas} for more information on differentiation with respect to measures.
        
        \begin{proof}
        Let's assume $W$ satisfies \eqref{eq:symHJB}, then we can define
        
        \begin{equation}
    		V_i(t,X) = W(t,x_i,X_{-i}),
    	\end{equation}
        
        where $X \in \R^{dN}, \, X_{-i} \in \R^{d(N-1)}$. Then the partial derivatives of $V_i$ satisfy
    
    	\begin{align*}
    		D_{x_i} V_i(t,X) &= D_x W(t,x_i,X_{-i}) \\
        	\partial_t V_i(t,X) &= \partial_t W(t,x_i,X_{-i}) \\
        	D_{x_j} V_j(t,X) &= D_x W(t,x_j,X_{-j}) \\
        	D_{x_j} V_i(t,X) &= D_{y_j} W(t,x_i,X_{-i}) \\
        	D_{x_j}^2 V_i(t,X) &= D_{y_j}^2 W(t,x_i,X_{-i}) \\
        	D_{x_i}^2 V_i(t,X) &= D_x^2 W(t,x_i,X_{-i}).
    	\end{align*}
    
    	Using the above identities along with assumptions \textbf{A} and \textbf{B}, it is clear that $W(t,x_i,X_{-i})$ is a solution to \eqref{eq:HJB2}. Therefore, we no longer have to concern ourselves with studying the $N$ equations of \eqref{eq:HJB2} for $i = 1, \ldots, N$. Instead we can look at the behaviour of \eqref{eq:symHJB}, as in \cite{Degond2017}, in particular we can look at the limiting case of \eqref{eq:symHJB} as $N \to \infty$. Note that since $f$, $g$ and $h$ only depend on the empirical distribution of $Y$, the solution, $W$, to \eqref{eq:symHJB} will also only depend on the empirical distribution of $Y$. Thus, there exists a function $\textbf{W}:[0,T] \times \R^d \times \mathcal{P}(\R^d) \to \R$ such that 
    
    	\[ W(t,x,Y) = \textbf{W}(t,x,m_Y) \]
    
    	The partial derivatives of $W$, and hence $V_i$, can also be seen as partial derivatives of $\textbf{W}$. We have
    
    	\begin{align*}
    		\frac{1}{2 \alpha(t)}|D_x W(t,x,Y)|^2 &= \frac{1}{2 \alpha(t)}|D_x \textbf{W}(t,x,m_Y)|^2 \\
        	\partial_t W(t,x,Y) &= \partial_t \textbf{W}(t,x,m_Y) \\
        	\sigma^2(t,x) D_x^2 W(t,x,Y) &= \sigma^2(t,x) D_x^2 \textbf{W}(t,x,m_Y). 
    	\end{align*}
    
    	We now have two remaining terms to consider, namely
    
    	\begin{align}
    		& \mathcal{C}_1 = \sum_{k = 1}^{N-1} \left(f(y_k,m_{Y_k^x}) - \frac{1}{\alpha(t)} D_x W(t,y_k,Y_k^x)\right) \cdot D_{y_k} W(t,x,Y)  \label{eq:sum1}\\
        	& \mathcal{C}_2 = \frac{1}{2} \sum_{j = 1}^{N-1} \Tr \left( \sigma^2(t,y_j) D_{y_j}^2 W(t,x,Y) \right) \label{eq:sum2}
    	\end{align}
    
    	It is possible to modify Proposition 6.1 in \cite{Cardaliaguet2015a} in order to rigorously express each of the terms through derivatives of the mean field limit $\textbf{W}$. Using this proposition, the following identities hold:
    
    	\begin{align}
    		D_x W(t,y_k,Y_k^x) &= D_x  \textbf{W}(t,y_k,m_{Y_k^x}) \\
        	D_{y_k} W(t,x,Y) &= \frac{1}{N-1} \partial_m \textbf{W}(t,x,m_Y)(y_k) \\
        	D_{y_j}^2 W(t,x,Y) &= \frac{1}{(N-1)^2} \partial_m^2 \textbf{W}(t,x,m_Y)(y_j,y_j) + \frac{1}{N-1} D_y \left[ \partial_m\textbf{W}(t,x,m_Y) \right] (y_j)
    	\end{align}
        
        In the above, $\partial_m^2 \textbf{W} := D_y \frac{\delta}{\delta m}(\partial_m\textbf{W})$, see \cite{Cardaliaguet2015a} and Appendix \ref{sec:meas} for further information on differentiability in the space of measures. Placing these identities into \eqref{eq:sum1} -- \eqref{eq:sum2} and simplifying using the definition of $m_Y$ we get
    
    	\begin{align}
    		\mathcal{C}_1 = \frac{1}{N-1} \sum_{k = 1}^{N-1} \left(f(y_k,m_{Y_k^x}) - \frac{1}{\alpha(t)} D_x \textbf{W}(t,y_k,m_{Y_k^x})\right) \cdot \partial_m \textbf{W}(t,x,m_Y)(y_k)  \label{eq:mf1}\\
            \begin{aligned}
            	\mathcal{C}_2 = \frac{1}{2(N-1)} \int_{\R^d} \Tr \left( \sigma^2(t,y) \partial_m^2 \textbf{W}(t,x,m_Y)(y) \right) m_Y(dy) \\
                + \frac{1}{2} \int_{\R^d} \Tr \left( \sigma^2(t,y) D_y \partial_{m} \textbf{W}(t,x,m_Y)(y) \right) m_Y(dy) 
            \end{aligned}            
        	\label{eq:mf2}
    	\end{align}
        
        We are now in a position to take the limit $N \to \infty$. First we assume that $m_Y \overset{*}{\rightharpoonup} m$ as $N \to \infty$. In this case, it is a matter of computation to show, for any $\phi \in C_c^{\infty}(\R^d)$, there exists an $\epsilon > 0$ such that
    
    	\begin{equation}
    		\left\vert \int_{\R} \phi(z) (m_{Y_k^x} - m) (dz) \right\vert \leq \left\vert \int_{\R} \phi(z) (m_Y - m) (dz) \right\vert + \frac{1}{N-1} \left\vert \phi(x) - \phi(y_k) \right\vert < \epsilon.
    	\end{equation}
    
    	Hence, $m_{Y_k^x} \overset{*}{\rightharpoonup} m$ as $N \to \infty$ as well. So, finally we find the mean field equation for $\textbf{W}(t,x,m)$ to be
    
		\begin{equation}
    		\begin{aligned}
    			\frac{1}{2 \alpha(t)} |D_x \textbf{W}(t,x,m)|^2 =& h(x,m) + \partial_t \textbf{W}(t,x,m) \\ 
                &+ f(x,m) \cdot D_x \textbf{W}(t,x,m) + \frac{1}{2} \Tr \left( \sigma^2(t,x) D_x^2 \textbf{W}(t,x,m) \right)  \\
    			&+ \int_{\R^d} \left( f(y,m) - \frac{1}{\alpha(t)} D_x \textbf{W}(t,y,m) \right) \cdot \partial_m \textbf{W}(t,x,m)(y) m(dy) \\
    			&+ \frac{1}{2} \int_{\R^d} \Tr \left( \sigma^2(t,y) D_y \left[\partial_{m} \textbf{W}(t,x,m)\right](y) \right) m(dy), \\ 
    	        \textbf{W}(T,x,m) = g(x). \qquad& 
    		\end{aligned}
    	\end{equation}
        
        \end{proof}
    
	\subsection{Mean field limit of the player dynamics}
    
    	Now that we have seen the mean field limit of the HJB equation, this needs to be married to the mean field limit of the player dynamics \eqref{eq:dynamics} with the optimal control, given by \eqref{eq:optcontrol}. 
        
        \vspace{\parskip}
        
        \begin{lem}
        Under assumptions \textbf{A}--\textbf{F} and the assumptions of Lemma \ref{lemma:mfg_limit_HJB}, then for every $i \in \{1, \ldots, N\}$, $X_i$ satisfying \eqref{eq:dynamics} converges to a random process satisfying the following McKean-Vlasov equation as $N \to \infty$ (uniformly in $t$ and $i$) in any compact interval $[0,T]$ in the $L^2$ sense.
        
        \begin{equation}
    		\begin{aligned}
    			&dX_t = (f(X_t,m_t) - \frac{1}{\alpha(t)} D_x \textbf{W}(t,X_t,m_t))dt + \sigma(t,X_t)dB_t. \\
    	    	&X_{t=0} = X_0 \label{eq:fullMFGdyn}
    	    \end{aligned}
    	\end{equation}
        
        Here, $B_t$ is a $d$-dimensional Wiener process, $m_t$ is the law of $X_t$ and $X_0$ is an independent random variable with law $\mathcal{L}(X_0) = m_0$, where $m_0$ is the limit as the number of players goes to infinity of the law of $X_{i,0}$ in the finite-agent case.
        
        \end{lem}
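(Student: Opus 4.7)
The plan is to recognise this as a standard propagation-of-chaos statement and reduce it to the framework already invoked at the end of Section \ref{sec:MPClim}, so that Theorem 1.4 of \cite{Sznitman1991} and Theorem 1.3 of \cite{Jourdain2007} apply verbatim. The only real work is to rewrite the $N$-player optimally-controlled dynamics in the McKean--Vlasov form and to verify the Lipschitz hypotheses for the new drift.

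First I would substitute the optimal feedback \eqref{eq:optcontrol} into \eqref{eq:dynamics} and use the identities established in the proof of Lemma \ref{lemma:mfg_limit_HJB}, namely $D_{x_i} V_i(t,X) = D_x W(t,x_i,X_{-i}) = D_x \mathbf{W}(t,x_i,m_{X_{-i}})$, together with assumption \textbf{A}. The $i$-th particle then satisfies
\begin{equation*}
    dX_i(t) = \Bigl( f(X_i(t), M_{-i}^N(t)) - \tfrac{1}{\alpha(t)} D_x \mathbf{W}(t, X_i(t), M_{-i}^N(t)) \Bigr) dt + \sigma(t, X_i(t)) \, dB_i(t),
\end{equation*}
with $M_{-i}^N(t) = \frac{1}{N-1}\sum_{j\neq i}\delta_{X_j(t)}$ and $X_i(0) = X_i^0$ i.i.d.\ with law $m_0$. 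This is exactly the form of an interacting particle system with mean-field drift $b(t,x,m) := f(x,m) - \frac{1}{\alpha(t)} D_x \mathbf{W}(t,x,m)$ and diffusion $\sigma(t,x)$.

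Next I would check that $(b,\sigma)$ satisfy the hypotheses of the propagation-of-chaos results cited above. Lipschitz continuity of $\sigma$ is assumption \textbf{G}, and Lipschitz continuity of $f$ in $(x,m)$ (with the $1$-Wasserstein distance on the measure argument) is assumption \textbf{E}. The new ingredient is Lipschitz continuity of $(x,m)\mapsto D_x \mathbf{W}(t,x,m)$, which is inherited from the regularity of $\mathbf{W}$ assumed in Lemma \ref{lemma:mfg_limit_HJB} (\,$C^2$ in the state variable and $C^2$ in the measure argument in the sense recalled in Appendix \ref{sec:meas}\,), possibly together with an additional growth hypothesis analogous to \textbf{F} on $D_x \mathbf{W}$; this should be stated as part of the assumption set of the lemma. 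The initial law condition is \textbf{D}.

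With Lipschitz $b$ and $\sigma$ in hand, the standard Sznitman coupling argument then closes the proof: I would introduce, on the same probability space, $N$ i.i.d.\ copies $\tilde X_i$ of the McKean--Vlasov SDE \eqref{eq:fullMFGdyn} driven by the same Brownian motions $B_i$ and with the same initial conditions $X_i^0$, and estimate $\mathbb{E}\sup_{s\leq t}|X_i(s)-\tilde X_i(s)|^2$ via It\^o, the Burkholder--Davis--Gundy inequality, and the Lipschitz bounds on $b,\sigma$. The cross term splits into $\textbf{d}_1(M_{-i}^N(s),\tilde M^N(s))$, controlled pathwise by $\frac{1}{N-1}\sum_{j\neq i}|X_j(s)-\tilde X_j(s)|$, plus $\textbf{d}_1(\tilde M^N(s), m_s)$, controlled by the classical empirical-measure convergence rates used in \cite{Sznitman1991, Jourdain2007}. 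Gr\"onwall closes the estimate uniformly in $i$ and in $t\in[0,T]$, yielding the asserted $L^2$ convergence. The chief obstacle, which I would flag explicitly, is justifying the Lipschitz/$C^2$ regularity of $\mathbf{W}$ in the measure argument: this is delicate for solutions of the master-type PDE \eqref{eq:mfhjb} and is really where the assumptions of Lemma \ref{lemma:mfg_limit_HJB} need to do their work; the rest is essentially the standard McKean--Vlasov limit.
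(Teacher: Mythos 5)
Your proposal follows essentially the same route as the paper: rewrite the optimally controlled $N$-player dynamics in symmetric mean-field form using the identities $D_{x_i}V_i(t,X)=D_x\mathbf{W}(t,x_i,m_{X_{-i}})$ from Lemma \ref{lemma:mfg_limit_HJB} together with assumption \textbf{A}, and then invoke the propagation-of-chaos results of \cite{Sznitman1991,Jourdain2007} exactly as in Section \ref{sec:MPClim}. You are in fact somewhat more careful than the paper, which simply cites assumptions \textbf{A}--\textbf{F} without noting that the Lipschitz continuity of $(x,m)\mapsto D_x\mathbf{W}(t,x,m)$ is not covered by \textbf{F} and must be extracted from (or added to) the regularity hypotheses on $\mathbf{W}$ --- a point you rightly flag as the only genuinely delicate step.
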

        
        \vspace{\parskip}
    
        \begin{rmk}
        	In a similar way to \eqref{eq:measurePDE}, it is possible to go  from \eqref{eq:fullMFGdyn} to the following continuity equation for $m_t$:
    
    	\begin{equation}
    		\begin{aligned}
    			& \partial_t m_t = D_x \cdot \left[ \left( \frac{1}{\alpha(t)} D_x \textbf{W}(t,x,m_t) - f(x,m_t) \right) m_t \right] + \frac{1}{2} \Tr \left( D_x^2 (\sigma^2(t,x) m_t) \right) \\
    	        & m_{t= 0} = m_0. 
    		\end{aligned} \label{eq:mfpd}
    	\end{equation}
        \end{rmk}
        
        \vspace{\parskip}
        
        \begin{proof}
        	 In light of the assumptions in Lemma \ref{lemma:mfg_limit_HJB}, and following the previous section's definition of $\textbf{W}(t,x,m)$, the dynamics of the N-player game can be reformulated as
    
    	\begin{equation}
    		\begin{aligned}
    			&dX_i(t) = (f(X_i(t),m_{-i}^N(t)) - \frac{1}{\alpha(t)} D_x \textbf{W}(t,X_i(t),m_{-i}^N(t)))dt + \sigma(t,X_i(t))dB_t^i \\
    	    	&X_i(0) = X_{i,0}.
    		\end{aligned}
    	\end{equation}
    
    	Here, $m_{-i}^N(t) = \sum_{j=1, \,j \neq i}^N \delta_{X_j(t)}$ and for every $i = 1, \ldots,N$, $X_{i,0}$ is an i.i.d. random variable with disribution $m_0$. Using the propagation of chaos, as in Section \ref{sec:MPC}, and assumptions \textbf{A}--\textbf{F}, we can conclude the result.
        \end{proof}

	\subsection{The mean field equation} \label{sec:mfe}
    
    Now that we have, by \eqref{eq:mfpd} and \eqref{eq:mfhjb}, the mean field limits of the player dynamics and HJB equation, we want to find how the HJB equation changes along characteristics governed by the mean field player dynamics. The following theorem describes that.
    
    \vspace{\parskip}
    
    \begin{thm} \label{thm:mean_limit}
    	Let's define 
        
        \begin{equation}
        	\textbf{w}(t,x) := \textbf{W}(t,x,m_t),
        \end{equation}
        
        where $m_t$ is satisfies \eqref{eq:mfpd}. Then $\textbf{w}(t,x)$ satisfies the following mean field PDE
        \begin{equation} \label{eq:mfg_value_fn}
        	\begin{aligned}
				&\frac{1}{2 \alpha(t)} |D_x \textbf{w}(t,x)|^2 = h(x,m_t) + \partial_t \textbf{w}(t,x) + f(x,m_t) \cdot D_x \textbf{w}(t,x) + \frac{1}{2} \Tr \left( \sigma^2(t,x) D_x^2 \textbf{w}(t,x) \right), \\
		    	&\textbf{w}(T,x) = g(x). 
			\end{aligned}
        \end{equation}

        This system together with \eqref{eq:mfpd} then fully describe the mean field limit of the dynamics of the $N$-player stochastic differential game
    \end{thm}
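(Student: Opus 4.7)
The plan is to apply the chain rule for functions on the space of probability measures to $\textbf{w}(t,x) = \textbf{W}(t,x,m_t)$ along the flow $m_t$ satisfying \eqref{eq:mfpd}, and then use \eqref{eq:mfhjb} to eliminate the $\partial_t \textbf{W}$ term. Since $m_t$ is, by \eqref{eq:mfpd} and \eqref{eq:fullMFGdyn}, the law of a McKean-Vlasov SDE with drift $f(\cdot,m_t) - \frac{1}{\alpha(t)} D_x \textbf{W}(t,\cdot,m_t)$ and diffusion $\sigma(t,\cdot)$, an It\^o-type formula on $\mathcal{P}(\R^d)$ applied to the map $t \mapsto \textbf{W}(t,x,m_t)$ with $x$ held fixed gives
\begin{align*}
\partial_t \textbf{w}(t,x) &= \partial_t \textbf{W}(t,x,m_t) + \int_{\R^d} \partial_m \textbf{W}(t,x,m_t)(y) \cdot \left( f(y,m_t) - \frac{1}{\alpha(t)} D_x \textbf{W}(t,y,m_t) \right) m_t(dy) \\
&\quad + \frac{1}{2} \int_{\R^d} \Tr \left( \sigma^2(t,y) D_y[\partial_m \textbf{W}(t,x,m_t)](y) \right) m_t(dy).
\end{align*}

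Next, I would rearrange the HJB equation \eqref{eq:mfhjb} to express $\partial_t \textbf{W}(t,x,m_t)$ in terms of the remaining quantities and substitute into the above identity. The two integral terms arising from the chain rule match exactly the two integral terms in \eqref{eq:mfhjb} and cancel upon substitution. Using $D_x \textbf{w}(t,x) = D_x \textbf{W}(t,x,m_t)$ and $D_x^2 \textbf{w}(t,x) = D_x^2 \textbf{W}(t,x,m_t)$ (which hold since $m_t$ does not depend on $x$), what remains is
\begin{equation*}
\partial_t \textbf{w}(t,x) = \frac{1}{2\alpha(t)} |D_x \textbf{w}(t,x)|^2 - h(x,m_t) - f(x,m_t) \cdot D_x \textbf{w}(t,x) - \frac{1}{2} \Tr \left( \sigma^2(t,x) D_x^2 \textbf{w}(t,x) \right),
\end{equation*}
which is \eqref{eq:mfg_value_fn} after rearrangement; the terminal condition is immediate from $\textbf{w}(T,x) = \textbf{W}(T,x,m_T) = g(x)$.

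The main obstacle is the rigorous justification of the chain rule on $\mathcal{P}(\R^d)$: one needs $\textbf{W}$ to admit a sufficiently smooth linear functional derivative $\partial_m \textbf{W}$ so that $D_y[\partial_m \textbf{W}]$ is well-defined, together with integrability bounds permitting differentiation under the integral sign. Given the $C^2$-regularity of $W$ in $Y$ assumed in Lemma \ref{lemma:mfg_limit_HJB} and the correspondence $W(t,x,Y) = \textbf{W}(t,x,m_Y)$, the required regularity of $\textbf{W}$ in its measure argument is inherited. The chain rule itself can then be obtained either by applying It\^o's formula directly to $\textbf{W}(t,x,\mathcal{L}(X_t))$ along \eqref{eq:fullMFGdyn} in the sense of Carmona-Delarue, or by a duality/integration-by-parts argument using the weak form of \eqref{eq:mfpd} tested against $y \mapsto \partial_m \textbf{W}(t,x,m_t)(y)$. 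Once this is in hand, the remaining derivation is purely algebraic and the cancellation of integral terms completes the proof.
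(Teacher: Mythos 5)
Your proposal is correct and follows essentially the same route as the paper: the paper also derives the chain-rule identity for $\partial_t \textbf{w}(t,x)$ (with exactly the two integral terms you write) and then substitutes it into \eqref{eq:mfhjb} so those terms cancel. The paper justifies the chain rule by the second of the two mechanisms you mention — writing the increment $\textbf{W}(t+h,x,m_{t+h})-\textbf{W}(t+h,x,m_t)$ via the functional derivative $\frac{\delta \textbf{W}}{\delta m}$ and testing the weak form of \eqref{eq:mfpd} against it before dividing by $h$ and passing to the limit.
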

    
    \vspace{\parskip}
    
    \begin{rmk}
    	With this theorem, we have also described the system that corresponds to $\epsilon$-Nash equilibria of the $N$-player game \eqref{eq:dynamics} (see e.g. \cite{Cardaliaguet2010} for how solutions to the mean field game correspond to Nash equilibria of the finite-player game).
    \end{rmk}

	\vspace{\parskip}
    
    \begin{proof}
    	Our first step is to describe $\partial_t \textbf{w}(t,x)$. Note that

		\begin{equation}
			\partial_t \textbf{w}(t,x) = \frac{d}{dt} \textbf{W}(t,x,m_t) = \lim_{h \to 0} \frac{\textbf{W}(t+h,x,m_{t+h}) - \textbf{W}(t,x,m_t)}{h}.
		\end{equation}

		Adding and subtracting $\textbf{W}(t+h,x,m_t)$, we get

		\begin{equation}
			\partial_t \textbf{w}(t,x) = \partial_t \textbf{W}(t,x,m_t) + \lim_{h \to 0} \frac{\textbf{W}(t+h,x,m_{t+h}) - \textbf{W}(t+h,x,m_t)}{h}.
		\end{equation}

		Following the method of (25) in \cite{Cardaliaguet2015a}, we get

		\begin{equation}
			\textbf{W}(t+h,x,m_{t+h}) - \textbf{W}(t+h,x,m_t) = \int_0^1 \int_{\R^d} \frac{\delta \textbf{W}}{\delta m}(t+h,x,(1-s)m_t + sm_{t+h})(y) (m_{t+h} - m_t)(dy) ds. \label{eq:mfeW}
		\end{equation}

		Using \eqref{eq:mfpd}, with a test function $\phi(y) := \frac{\delta \textbf{W}}{\delta m}(t+h,x,(1-s)m_t + sm_{t+h})(y)$, we see \eqref{eq:mfeW} leads to

		\begin{equation}
			\begin{aligned}
            &\textbf{W}(t+h,x,m_{t+h}) - \textbf{W}(t+h,x,m_t) = \\
				&= \int_0^1 \int_t^{t+h} \int_{\R^d} \frac{1}{2} \Tr( \left( \sigma^2(\tau,y) D_y^2 \frac{\delta \textbf{W}}{\delta m}(t+h,x,(1-s)m_t + sm_{t+h})(y) \right)  m_{\tau}(dy) d\tau ds \\
		        &+ \int_0^1 \int_t^{t+h} \int_{\R^d} \left( f(y,m_{\tau}) - \frac{1}{\alpha(\tau)} D_x \textbf{W}(\tau,y,m_{\tau}) \right) \cdot \\
                & \qquad \qquad \qquad \qquad \quad \qquad \cdot \left( D_y \frac{\delta \textbf{W}}{\delta m}(t+h,x,(1-s)m_t + sm_{t+h})(y) \right)  m_{\tau}(dy) d\tau ds
			\end{aligned}           
		\end{equation}

		We now divide by $h$, take the limit $h \to 0$ and, noting $\partial_m \textbf{W} :=  D_y \frac{\delta \textbf{W}}{\delta m}$, we get

		\begin{equation}
			\begin{aligned}
				\partial_t \textbf{w}(t,x) = \partial_t \textbf{W}(t,x,m_t) + \int_{\R^d} \left( f(y,m_t) - \frac{1}{\alpha(t)} D_x \textbf{W}(t,y,m_t) \right) \cdot \partial_m \textbf{W}(t,x,m_t)(y) m_t(dy) & \\
		        + \frac{1}{2} \int_{\R^d} \Tr \left( \sigma^2(t,y) D_y [\partial_m \textbf{W}(t,x,m_t)](y) \right) m_t(dy) &.
			\end{aligned}
		\end{equation}

		We can therefore substitute this into \eqref{eq:mfhjb} to obtain \eqref{eq:mfg_value_fn}
    \end{proof}

		So, being able to solve \eqref{eq:mfpd} and \eqref{eq:mfg_value_fn} gives a solution along characteristics for $\textbf{W}$. This corollary summarsises the result
        \vspace{\parskip}
        
        \begin{cor}
        	Provided assumptions \textbf{A} -- \textbf{F}  and the assumptions in Lemma \ref{lemma:mfg_limit_HJB} hold, the mean field limit of the HJB equation governing the stochastic differential game \eqref{eq:dynamics}, along with the evolution in time of the distribution of agents is given by 

		\begin{align}
			&\frac{1}{2 \alpha(t)} |D_x \textbf{w}(t,x)|^2 = h(x,m_t) + \partial_t \textbf{w}(t,x) + f(x,m_t) \cdot D_x \textbf{w}(t,x) + \frac{1}{2} \Tr \left( \sigma^2(t,x) D_x^2 \textbf{w}(t,x) \right), \label{fullmf1} \\
		    & \partial_t m_t = D_x \cdot \left[ \left( \frac{1}{\alpha(t)} D_x \textbf{w}(t,x) - f(x,m_t) \right) m_t \right] + \frac{1}{2} \Tr \left( D_x^2 (\sigma^2(t,x) m_t) \right) \label{fullmf2},
		\end{align}

		with initial and terminal conditions:

		\begin{align}
			\textbf{w}(T,x) &= g(x) \\
		    m_{t= 0} &= m_0.
		\end{align}

		Equation \eqref{fullmf1} describes the evolution (backwards in time) of an individual agent's expected cost, while \eqref{fullmf2} describes the evolution (forwards in time) of the distribution of all agent in the state space.
        \end{cor}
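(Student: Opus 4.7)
This corollary is essentially a bookkeeping statement: it gathers into one display the backward PDE for $\textbf{w}(t,x)$ proved in Theorem \ref{thm:mean_limit} and the forward continuity equation for $m_t$ recorded in the remark following the McKean--Vlasov limit. My plan is therefore to verify that the hypotheses of the corollary are sufficient to invoke both of these ingredients, and then to assemble them together with their initial/terminal data.

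First I would observe that equation \eqref{fullmf1} is nothing other than \eqref{eq:mfg_value_fn} from Theorem \ref{thm:mean_limit}, whose proof relied only on: (i) the existence of a function $\textbf{W}$ satisfying the mean field HJB \eqref{eq:mfhjb}, which is furnished by Lemma \ref{lemma:mfg_limit_HJB}; (ii) the Lipschitz regularity of $f$, $D_x h$, $D_x g$ and $\sigma$ from assumptions $\textbf{E}$--$\textbf{F}$ (plus the analogous hypothesis on $\sigma$ used in Section \ref{sec:MPClim}); and (iii) the interpretation of $m_t$ as the law of the limiting McKean--Vlasov process. All of these are in force under the assumptions of the corollary. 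Second, I would note that \eqref{fullmf2} is precisely \eqref{eq:mfpd}, obtained from the McKean--Vlasov equation \eqref{eq:fullMFGdyn} by testing against $\phi \in C_c^{\infty}(\R^d)$, applying It\^o's formula, taking expectations, and using dominated convergence to bring the expectation inside the time integral, so that the law $m_t$ of $X_t$ satisfies the stated weak PDE. Existence and uniqueness of the McKean--Vlasov solution were already secured in the preceding lemma under assumptions $\textbf{A}$--$\textbf{F}$, with the Lipschitz regularity of the drift $-\tfrac{1}{\alpha(t)} D_x \textbf{W}$ inherited from the $C^2$ regularity of $\textbf{W}$ in $x$ assumed in Lemma \ref{lemma:mfg_limit_HJB}.

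Finally I would address the boundary data and assemble. The terminal condition $\textbf{w}(T,x) = g(x)$ follows instantly from the definition $\textbf{w}(t,x) := \textbf{W}(t,x,m_t)$ at $t=T$ together with the terminal condition $\textbf{W}(T,x,m) = g(x)$ from \eqref{eq:mfhjb}; the initial condition $m_{t=0} = m_0$ is built into the McKean--Vlasov SDE \eqref{eq:fullMFGdyn} via assumption $\textbf{D}$. There is no genuine obstacle, since all substantive work has been done upstream in Lemma \ref{lemma:mfg_limit_HJB} and Theorem \ref{thm:mean_limit}. The only point of care is to note that the regularity needed to make sense of the nonlinear term $|D_x \textbf{w}|^2$ in \eqref{fullmf1} and of the transport term $D_x \textbf{w} \cdot m_t$ in \eqref{fullmf2} is inherited directly from the $C^2$ regularity of $\textbf{W}$ in $x$ postulated in Lemma \ref{lemma:mfg_limit_HJB}, so that both equations are well-posed in the weak sense stated.
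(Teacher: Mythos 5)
Your proposal is correct and matches the paper's treatment: the paper offers no separate proof of this corollary, presenting it exactly as you do — as a direct assembly of the backward equation \eqref{eq:mfg_value_fn} from Theorem \ref{thm:mean_limit} and the forward continuity equation \eqref{eq:mfpd}, with the terminal and initial data inherited from the definition of $\textbf{w}$ and assumption \textbf{D}. Nothing further is needed.
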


	\subsection{The mean field BRS for the MFG system of equations}

		Having calculated the mean field equations in the previous section, we now use the MPC approach to obtain the mean field BRS from the MFG and find that it matches \eqref{eq:measurePDE}. Clearly, in terms of numerical calculations, it is much simpler to use the best reply strategy dynamics, \eqref{eq:measurePDE} as an approximation to \eqref{fullmf1} and \eqref{fullmf2} rather than try to solve these two equations directly. This section will show that the BRS is indeed a simplification of \eqref{fullmf1} and \eqref{fullmf2} using MPC methods.
        
        \vspace{\parskip}
        
        \begin{thm}
        	Define $V_i^{\Delta t}(t,x)$ by 
        
        	\begin{equation}
        		V_i^{\Delta t}(t,x) = \min_{u_i \in \mathcal{A}} \E \left[  \int_t^{t + \Delta t} \left( \frac{\alpha_i(s)}{2} |u_i|^2 +\frac{1}{\Delta t} h_i^{(N)}(X(s)) \right) ds + \frac{1}{T} g_i^{(N)}(X(t + \Delta t)) \right],
        	\end{equation}
        
            where $X(s)$ solves \eqref{eq:dynamics} with controls 
            
            \begin{equation}
            	u_i^* =  \argmin_{u_i \in \mathcal{A}} \E \left[  \int_t^{t + \Delta t} \left( \frac{\alpha_i(s)}{2} |u_i|^2 +\frac{1}{\Delta t} h_i^{(N)}(X(s)) \right) ds + \frac{1}{T} g_i^{(N)}(X(t + \Delta t)) \right],
            \end{equation}
            
            and $X(t) = x$. Then the HJB equation associated with $V_i^{\Delta t}$ has a mean field limit
            
            \begin{equation}
            	\begin{aligned}
				&\frac{1}{2 \alpha(t)} |D_x \textbf{w}(t,x)|^2 = \frac{h(x,m_t)}{\Delta t} + \partial_t \textbf{w}(t,x) + f(x,m_t) \cdot D_x \textbf{w}(t,x) + \frac{1}{2} \Tr \left( \sigma^2(t,x) D_x^2 \textbf{w}(t,x) \right), \\
		    	&\textbf{w}(t + \Delta t,x) = \frac{1}{T} g(x),
			\end{aligned}
            \end{equation}
            
            the solution of which can be approximated up to an error of order $O(\Delta t)$ by 
            
            \begin{equation}
            	\textbf{w}^{\Delta t}(t,x) = (h + \frac{1}{T} g)(x,m_t).
            \end{equation}
            
            The corresponding law of motion for the distribution of players in the mean field limit is therefore given (up to an error of order $O(\Delta t)$) by
            
            \begin{equation} \label{eq:small_mean_field_limit}
	    	\begin{aligned}
	    		& \partial_t m_t = D_x \cdot \left[ \left( \frac{1}{\alpha(t)} D_x (h + \frac{1}{T} g)(x,m_t) - f(x,m_t) \right) m_t \right] + \frac{1}{2} \Tr \left( D_x^2 (\sigma^2(t,x) m_t) \right) \\
	            & m_{t= 0} = m_0.
	    	\end{aligned}
		\end{equation}
         
        \end{thm}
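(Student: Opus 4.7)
The plan is to recycle, with essentially no change, the two-stage programme of Lemma \ref{lemma:mfg_limit_HJB} and Theorem \ref{thm:mean_limit}: first produce an HJB equation for $V_i^{\Delta t}$ at finite $N$, then pass to the mean field limit, and finally exploit the shortness of the horizon to replace the resulting PDE by an explicit closed-form approximation, which can then be fed into the continuity equation.

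First, I would observe that $V_i^{\Delta t}$ solves exactly the HJB equation \eqref{eq:HJB2} but posed on $[t, t + \Delta t]$, with the running cost $h_i^{(N)}$ replaced by $h_i^{(N)}/\Delta t$ and the terminal data $V_i^{\Delta t}(t + \Delta t, x) = \tfrac{1}{T} g_i^{(N)}(x)$. Under assumptions \textbf{A}--\textbf{F}, the symmetry reduction to a function $W^{\Delta t}(t,x,Y)$ and the identities from Proposition 6.1 of \cite{Cardaliaguet2015a} used in the proof of Lemma \ref{lemma:mfg_limit_HJB} go through verbatim, yielding a mean field limit $\textbf{W}^{\Delta t}(t,x,m)$ satisfying the analogue of \eqref{eq:mfhjb} with $h/\Delta t$ in place of $h$ and with terminal condition $\textbf{W}^{\Delta t}(t + \Delta t, x, m) = \tfrac{1}{T} g(x)$. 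Setting $\textbf{w}^{\Delta t}(t,x) := \textbf{W}^{\Delta t}(t,x,m_t)$ and repeating the chain-rule computation from the proof of Theorem \ref{thm:mean_limit}, the integrals involving $\partial_m \textbf{W}^{\Delta t}$ and $D_y \partial_m \textbf{W}^{\Delta t}$ are cancelled by the contribution of $\partial_t m_t$, producing the reduced PDE in the statement.

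Next, I would perform the backward-Euler time discretization of this reduced PDE on the single step $[t, t + \Delta t]$, exactly as in Method 2 of Section \ref{sec:MPCdyn}. Approximating $\partial_t \textbf{w}^{\Delta t}(t,x)$ by $(\textbf{w}^{\Delta t}(t + \Delta t, x) - \textbf{w}^{\Delta t}(t,x))/\Delta t$ and multiplying the reduced PDE through by $\Delta t$, every term proportional to $D_x \textbf{w}^{\Delta t}$ or $D_x^2 \textbf{w}^{\Delta t}$ becomes $O(\Delta t)$, so that only the balance
\[
h(x,m_t) + \textbf{w}^{\Delta t}(t + \Delta t, x) - \textbf{w}^{\Delta t}(t,x) = O(\Delta t)
\]
survives. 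Inserting the terminal condition $\textbf{w}^{\Delta t}(t + \Delta t, x) = \tfrac{1}{T} g(x)$ then yields $\textbf{w}^{\Delta t}(t,x) = (h + \tfrac{1}{T} g)(x, m_t) + O(\Delta t)$. Feeding the corresponding closed-loop control $u^* = -\tfrac{1}{\alpha(t)} D_x \textbf{w}^{\Delta t}(t,x) = -\tfrac{1}{\alpha(t)} D_x(h + \tfrac{1}{T} g)(x,m_t) + O(\Delta t)$ into the McKean--Vlasov SDE \eqref{eq:fullMFGdyn} and pushing forward through It\^o's formula, as in Section \ref{sec:MPClim}, produces precisely \eqref{eq:small_mean_field_limit}.

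The hard part will be controlling the $O(\Delta t)$ remainder uniformly in $x$: the finite-difference approximation of $\partial_t \textbf{w}^{\Delta t}$ and the a priori bounds on $D_x \textbf{w}^{\Delta t}$ and $D_x^2 \textbf{w}^{\Delta t}$ both require regularity of $\textbf{W}^{\Delta t}$ that is in principle $\Delta t$-dependent. The cleanest route around this is to exploit the Lipschitz estimates supplied by assumptions \textbf{E}--\textbf{G} on $f$, $D_x h$, $D_x g$ and $\sigma$, together with the regularity assumed in Lemma \ref{lemma:mfg_limit_HJB}, so that the remainder itself has bounded spatial gradient. This is exactly what is needed to justify that replacing $D_x \textbf{w}^{\Delta t}$ by $D_x(h + \tfrac{1}{T} g)$ in the drift of the continuity equation perturbs $m_t$ only by $O(\Delta t)$, and hence that \eqref{eq:small_mean_field_limit} holds to the stated order.
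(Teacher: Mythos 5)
Your proposal follows essentially the same route as the paper: invoke the mean field limit machinery of Lemma \ref{lemma:mfg_limit_HJB} and Theorem \ref{thm:mean_limit} for the rescaled cost, discretise the reduced HJB backwards in time over the single step $[t, t+\Delta t]$, use the terminal condition to obtain $\textbf{w}^{\Delta t}(t,x) = (h + \tfrac{1}{T}g)(x,m_t) + O(\Delta t)$, and substitute the resulting control into the continuity equation. Your closing paragraph on uniformly controlling the $O(\Delta t)$ remainder is a reasonable extra precaution that the paper's (formal) proof does not attempt, but it does not alter the argument.
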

        
        \begin{proof}
        
        First note that the HJB equation ssociated with $V_i^{\Delta t}$ has a mean field limit $\textbf{w}^{\Delta t}$, as defined by Theorem \ref{thm:mean_limit}, and it satisfies \eqref{eq:small_mean_field_limit}. Following a method similar to Method 2 in Section \ref{sec:MPCdyn}, we assume we have a solution of $\textbf{w}$ and $m$ at time $t$ and we are interested in understanding the solution at a time in the future $t+\Delta t$, for small $\Delta t$. Setting $T = t + \Delta t$ and discretising \eqref{fullmf1} backwards in time we get, up to an error of order $O(\Delta t)$

		\begin{equation}
        	\begin{aligned}
        		\frac{1}{2 \alpha(t + \Delta t)} |D_x \textbf{w}(t + \Delta t,x)|^2 &= \frac{h(x,m_{t + \Delta t})}{\Delta t} + \frac{\textbf{w}(t + \Delta t,x) - \textbf{w}(t,x)}{\Delta t} \\
                &+ f(x,m_{t + \Delta t}) \cdot D_x \textbf{w}(t + \Delta t,x) \\
                &+ \Tr \left( \sigma^2(t + \Delta t,x) D_x^2 \textbf{w}(t + \Delta t,x) \right).
        	\end{aligned}
		\end{equation}
        
        Then, using the terminal condition for $\textbf{w}^{\Delta t}$ we get, up to an error of order $O(\Delta t)$

		\begin{equation}
			\textbf{w}^{\Delta t}(t,x) = (h + \frac{1}{T} g)(x,m_t).
		\end{equation}

		Substituting this into the mean field equation for $m$ gives, up to an error of order $O(\Delta t)$

		\begin{equation}
	    	\begin{aligned}
	    		& \partial_t m_t = D_x \cdot \left[ \left( \frac{1}{\alpha(t)} D_x (h + \frac{1}{T} g)(x,m_t) - f(x,m_t) \right) m_t \right] + \frac{1}{2} \Tr \left( D_x^2 (\sigma^2(t,x) m_t) \right) \\
	            & m_{t= 0} = m_0.
	    	\end{aligned}
		\end{equation}
        
        \end{proof}
        
        \begin{rmk}
            It is interesting to note that the mean field dynamics found here are the same as those for the controlled dynamics in Subsection \ref{sec:MPCdyn}. Thus we can conclude that the best reply strategy for the mean field stochastic differential game can be derived by either first applying the MPC method to the N-player game and then taking the mean field limit, or by first taking the mean field limit of the N-player game and then applying the MPC method.
        \end{rmk}
    
\section{Applications}
    
    In this section we will take some examples from the MFG and BRS literature and use the paradigm of this paper to compare the two approaches.
    
    \subsection{Wealth distribution driven by local Nash equilibria}
    
    	This example is taken from \cite{Degond2014}. Their model described the evolution of agents' wealth and economic configuration (which was noted in \cite{Degond2014} as possibly being a diverse number of attributes, from social status to education level depending on the situation) in time as a response to trading between agents. The trading was assumed to depend on the difference in wealth between two agents that want to trade, and has its origins in \cite{Bouchaud2000}, as well as later work by \cite{During2007}. In this model, we assume $d = 2$, since we are considering agents are described firstly by their wealth and secondly by their economic configuration, so in our framework we have
        
        \[X_i(t) = (Y_i(t),Z_i(t)).\]
        
        Here, $Y_i$ is agent $i$'s economic configuration and $Z_i$ is their wealth. It is assumed that there is no debt in this model, hence $Z_i > 0$ for all $i$. They are governed by the following system of equations
        
        \begin{align}
			& \frac{dY_i}{dt} = v(X_i(t)) \\
            & dZ_i(t) = u_i(t) dt + \sqrt{2d} Z_i(t) dB_i(t).
        \end{align}
        
        Notice that the first equation is deterministic and can not be explicitly controlled, whereas the second equation has a control $u_i$ but no deterministic movement otherwise. $v$ describes the speed at which an agent's economic configuration evolves. Now, we introduce the following notation, similar to notation at the beginning of Section \ref{sec:MPCdyn}, $Y(t) = (Y_i(t))_i$, $Y_{-i}(t) = (Y_j(t))_{j \neq i}$, and similarly for $Z$, $Z_{-i}$. The value functional is given by
        
        \begin{equation}
        	\begin{aligned}
        		&V_i^{\Delta t}(t,x) = \E \left[ \frac{1}{\Delta t}\int_t^{t + \Delta t} \frac{\Delta t u_i^2}{2} + \Phi^{(N)}(X(s)) ds\right] \\
                &X(t) = x, \\ 
                & \Phi^{(N)}(x) =  \frac{1}{N}\sum_{j = 1, \, j \neq i}^N \xi_{i,j}(y) \Psi(|y_i - y_j|) \phi(z_i - z_j).
        	\end{aligned}
        	\label{eq:wealth_cost}
        \end{equation}
        
        In \cite{Degond2014} it is explained that $\phi$ is the trading interaction potential, i.e. it governs the amount of trading that occurs between any two agents based on their difference in wealth. It is also explained that $\xi_{i,j}(Y(s)) \Psi(|Y_i(s) - Y_j(s)|)$ is the trading frequency, i.e. the rate at which trades or movement of wealth takes place between two agents, determined by how far apart the agents' economic configuration is. Several assumptions are made on each of the functions in \ref{eq:wealth_cost}. First, we assume that the function $\phi:\R \to \R$ is $C^2$ and an even function. Second, we assume $\xi_{ij} = \xi_{ji}$ and that it is dependent on the number of other agents in a neighbourhood of the economic configuration of agents $i$ and $j$. As in \cite{Degond2014}, we assume $\xi_{ij}$ has the following form
        
        \begin{equation}
        	\xi_{ij}(y) = \xi\left( \frac{\rho_i^{\Psi} + \rho_j^{\Psi}}{2} \right), \qquad \rho_i^{\Psi} = \sum_{l = 1, \, l \neq i}^N \Psi(|y_j - y_l|).
        \end{equation}
             
        Since each agent can only control their $Y_i$ variable, the HJB equation is modified to 
		
        \begin{equation}
        	\begin{aligned}
        		\frac{(\partial_{z_i}V_i^{\Delta t})^2}{2} = & \frac{\Phi^{(N)}(x)}{\Delta t} + \partial_t V_i^{\Delta t} + v(x_i) \partial_{y_i} V_i^{\Delta t} \\
                &+ \sum_{j = 1 j \neq i}^N (v(x_j) \partial_{y_j} V_i^{\Delta t} + \partial_{z_j} V_j \partial_{z_j} V_i^{\Delta t}) + \sum_{j = 1}^N d z_j^2 \partial_{z_jz_j} V_i^{\Delta t}.
        	\end{aligned}
        \end{equation}
    	
        This is generally an extremely difficult equation to solve and so, although the optimal control is given by $u_i^*(t,x) = \partial_{z_i} V_i^{\Delta t}(t,x)$, the best reply strategy of $u_i^*(t,x) = -\frac{1}{N}\sum_{j = 1, \, j \neq i}^N \xi_{i,j}(y) \Psi(|y_i - y_j|) \phi'(z_i - z_j)$, is a much more tractable and realistic suggestion for how wealth may really be moved.
        
	\subsubsection*{The mean field limit}
        
        Under the framework of this paper, assumptions \textbf{A} and \textbf{C} are automatically fulfilled. It is also relatively straightforward to notice that
        \begin{equation}
        	\Phi^{(N)}(x) = \int_{\R^2} \xi \left( \frac{\rho_N^{\psi}(y_i) - \rho_N^{\psi}(y')}{2} \right) \Psi(|y_i - y'|) \phi(z_i-z') m_{-i}^{N-1}(dx'),
        \end{equation}
        
        where $\rho_N^{\psi}(y_j) = \sum_{l = 1, \, l \neq j}^N \Psi(|y_j - y_l|) = \int_{\R^2} \Psi(|y_j - y'|)m_{-j}^{N-1}(dx)$ for any $j = 1, \ldots, N$ and $m_{-i}^{N-1} = \sum_{j = 1, \, j \neq i}^N \delta_{x_j}$. So, with a trivial modification that doesn't affect the convergence, assumption \textbf{B} is also satisfied. Similarly, we can ensure assumptions \textbf{D}, \textbf{E} and \textbf{F} are satisfied as long as we assume $\Psi$ and $v$ are both Lipschitz. So under these relatively weak assumptions we find that in the mean field limit, with individuals using the BRS, $X_i(t) \to X_t$ for every $i = 1, \ldots, N$ who's distribution evolves according to the following Fokker-Planck equation
        
        \begin{equation}
        	\begin{aligned}
        		\partial_t m_t = - \partial_y(v(x) m_t) + \partial_z(\partial_z\Phi(x,m_t) m_t) + d\partial_{zz}(z^2m_t), \\
                m_{t=0} = m_0.
        	\end{aligned} 
            \label{eq:mfwealth}
        \end{equation}
        
        Here $\Phi(x,m) = \int_{\R^2} \xi \left( \frac{\rho^{\psi}(y,m) - \rho^{\psi}(y',m)}{2} \right) \Psi(|y - y'|) \phi(z-z') m(dx')$ and $ \rho^{\psi}(y,m) = \int_{\R^2} \Psi(|y-y'|)m(dx')$. This is supplemented with various boundary conditions in \cite{Degond2014} to close the PDE problem. Equation \eqref{eq:mfwealth} is also an order $O(\Delta t)$ approximation of the full mean field game below, where $\textbf{w}^{\Delta t}$ is the mean field limit of $V_i^{\Delta t}$, as described in section \ref{sec:mfe}.
        
        \begin{align}
        	&\begin{aligned}
        		&\partial_t m_t = - \partial_y(S(x) m_t) + \partial_z(\partial_z\textbf{w}(t,x) m_t) + d\partial_{zz}(z^2m_t), \\
                &m_{t=0} = m_0.
        	\end{aligned} \\
            	&\begin{aligned}
            		&(\partial_y \textbf{w}^{\Delta t}(t,x))^2 + (\partial_z \textbf{w}^{\Delta t}(t,x))^2 = \frac{\Phi(x,m_t)}{\Delta t} + \partial_t \textbf{w}^{\Delta t}(t,x) + S(x) \partial_y \textbf{w}^{\Delta t}(t,x) + 2dz^2 \partial_{zz}\textbf{w}^{\Delta t}(t,x) \\
                    &\textbf{w}(T,x) = 0.
            	\end{aligned}
        \end{align}
        
         It is clear that either solving \eqref{eq:mfwealth} numerically, or analytically showing that solutions do exist is a much simpler problem than solving the full mean field equations related to the fully optimal solution.
        
	\subsection{Congestion and aversion in pedestrian crowds}
    
    This second example has been taken from \cite{Lachapelle2011}. In the paper, the authors begin with an overview of the different methods for modelling traffic and pedestrian dynamics, followed by a description of how mean field games may be used as a bridge from microscopic traffic models to macroscopic. The paper then continues by describing an MFG model of pedestrian traffic. This model is perfectly suited to adapting to a BRS approach, firstly because the cost function implemented by \cite{Lachapelle2011} can be adapted to the approach taken in this paper, and secondly because it is natural to imagine that individuals in a crowd don't optimise their own behaviour based on the long-term future behaviour of other individuals around them, as described by the complex MFG framework. Rather, an assumption that individuals look at the flow around them at an almost instantaneous moment in time and change their behaviour accordingly seems to fit more naturally to our lived experience and is best described through the BRS framework.
    
    The paper \cite{Lachapelle2011} considers two populations of groups, their analysis begins with assuming the mean field limit has been taken and that in this limit, the distribution of each group is absolutely continuous with respect to the Lebesgue measure. We can modify our analysis from Section \ref{sec:MPC} to accomodate these ideas. We begin by considering two populations of individuals, with distribution functions $m_1(t,x)$ and $m_2(t,x)$ respectively. The respective positions in space $\R^d$ of a representative particle are given by $Y(t)$ and $Z(t)$. They move according to the following SDE
    
    \begin{align}
    	dY(t) = \alpha(t) dt + \sigma dB_1(t) \label{eq:pedmv1}\\
        dZ(t) = \beta(t) dt + \sigma dB_2(t). \label{eq:pedmv2}
    \end{align}
    
    In \eqref{eq:pedmv1}--\eqref{eq:pedmv2}, $\alpha$ and $\beta$ are the controls of the populations, $\sigma \in \diag(\R^d)$ is a diagonal positive matrix, and $B_1$ and $B_2$ are independent $d$-dimensional Brownian motions. As in \cite{Lachapelle2011}, we focus on the two populations interacting on some domain $\Omega \subset \R^2$. The cost function being optimised by each representative player is given by
    
    \begin{align}
    	V_{\lambda}(t,m_1,m_2) = \E \left[ \int_t^T \left( \frac{|\alpha(s)|^2}{2} + \Phi_{\lambda}(Y(s),m_1(s),m_2(s)) \right) ds + \Psi_1(Y(T)) \right] \label{eq:pedcost1} \\
        W_{\lambda}(t,m_y,m_z) = \E \left[ \int_t^T \left( \frac{|\beta(s)|^2}{2} + \Phi_{\lambda}(Z(s),m_2(s),m_1(s)) \right) ds + \Psi_2(Z(T)) \right] \label{eq:pedcost2} \\
        (Y(t),Z(t)) \sim (m_1(t),m_2(t)). \label{eq:pedcost3}
    \end{align}
    
    Using the formulation of $\Phi$ in \cite{Lachapelle2011}, it can be consistently defined to match the framework of this paper by
    
    \begin{equation}
    	\Phi_{\lambda}(x,m_i,m_j) = m_i(x) + \lambda m_j(x), \qquad \lambda > 0.
    \end{equation}
    
    Here, \cite{Lachapelle2011} described $\lambda$ as the 'xenophobia parameter', that is it measures how averse each group is to one another. If $\lambda$ is high then the two groups will separate as much as possible, whereas if $\lambda$ is low, the groups will be as or more worried about their distance between individuals in the same group than those in the opposite group. Equations \eqref{eq:pedmv1} --- \eqref{eq:pedcost3} are formulated in the following mean field game system in \cite{Lachapelle2011}
    
    \begin{align}
    	\partial_t m_1 = \frac{1}{2} \Tr \left( \sigma^2 D_x^2 m_1 \right) +  D_x \cdot (D_x V_{\lambda} m_1) \\
        \partial_t m_2 = \frac{1}{2} \Tr \left( \sigma^2 D_x^2 m_2 \right) +  D_x \cdot (D_x W_{\lambda} m_2) \\
       \frac{|D_x V_{\lambda}|^2}{2} = \partial_t V_{\lambda} + \frac{1}{2} \Tr \left( \sigma^2 D_x^2 V_{\lambda} \right) + \Phi(x,m_1,m_2) \\
        \frac{|D_x W_{\lambda}|^2}{2} = \partial_t W_{\lambda} + \frac{1}{2} \Tr \left( \sigma^2 D_x^2 W_{\lambda} \right) + \Phi(x,m_2,m_1).
    \end{align}
    
    Under the paradigm of this paper, we consider that in fact in each interval $[t,t + \Delta t]$ agents are minimising the following cost with respect to a fixed (in time) control random variable.
    
    \begin{align}
    	V_{\lambda}^{\Delta t}(t,m_1,m_2) = \E \left[ \int_t^{t + \Delta t} \left( \frac{|\alpha(s)|^2}{2} + \frac{1}{\Delta t} \Phi_{\lambda}(Y(s),m_1(s),m_2(s)) \right) ds + \frac{1}{T} \Psi_1(Y(t + \Delta t)) \right] \label{eq:mpcpedcost1} \\
        W_{\lambda}^{\Delta t}(t,m_y,m_z) = \E \left[ \int_t^{t + \Delta t} \left( \frac{|\beta(s)|^2}{2} + \frac{1}{\Delta t} \Phi_{\lambda}(Z(s),m_2(s),m_1(s)) \right) ds + \frac{1}{T} \Psi_2(Z(t + \Delta t)) \right] \label{eq:mpcpedcost2} \\
        (Y(t),Z(t)) \sim (m_1(t),m_2(t)). \label{eq:mpcpedcost3}
    \end{align}    
    
    Using the best reply strategy approach, we are able to simplify this system to the following two equations, which describe the evolution of the two populations.
    
    \begin{align}
    	\partial_t m_1 = \frac{1}{2} \Tr \left( \sigma^2 D^2 m_1 \right) +  D \cdot ((D m_1 + \lambda D m_2 + \frac{1}{T} D \Psi_1) m_1) \\
        \partial_t m_1 = \frac{1}{2} \Tr \left( \sigma^2 D^2 m_2 \right) + D \cdot ((D m_2 + \lambda D m_1 + \frac{1}{T} D \Psi_2) m_1).
    \end{align}
    
    This section has clearly shown some of the potential benefits of using the BRS to replace MFGs in certain situations, of course when exactly this is appropriate requires further investigation. However, it is intuitive from the formulation of the BRS that in situations where short time horizons are considered and agents are unable to optimise their behaviour efficiently then there is a case for using the BRS.
    
    \section{Conclusion and future perspectives}
    
    In conclusion, we have shown that the BRS, a sub-optimal strategy for players in a stochastic differential game, can be derived from the optimal strategy as an asymptotic limit of a revised cost functional. The BRS is an important alternative strategy to the MFG to consider when the time horizon of the optimisation problem is small because it depends only on the running and terminal costs. As a result there is no HJB equation to solve, and since the HJB equation is often intractable the BRS offers a more tractable modelling approach and at reduced computational effort. We then showed how, under certain conditions, the BRS can produce a mean field limit as the number of players tends to infinity. To close our analysis, we  proved that the mean field game equations first introduced by Lasry and Lions \cite{Lasry2007}, which are the mean field limit of the stochastic differential game, can also be approximated by the BRS. We concluded that regardless of whether we approximate the MFG by the mean field BRS, or approximate the $N$-player stochastic differential game by the BRS first and then take the mean field limit, the resulting dynamics of the distribution of players is the same.
    
    In the final section we were able to analyse two examples from existing literature. In the first, the BRS was already used as the dynamics for the mean field behaviour, so we can now justify this use by explaining that the agents involved in the behaviour approximately minimise a related cost. In the second example, we show how a mean field game for congestion could be approximated by using the BRS. This simplified the behaviour considerably and could allow us to computationally model the behaviour more efficiently. We have a number of future directions.
    
    Throughout the paper we have had to renormalise the optimisation problem to obtain the BRS as an approximation to a solution to the game. We have not claimed that the resulting BRS will now approximate the MFG solution for the original optimisation problem. In fact, one can imagine situations where the BRS will be qualitatively similar to the MFG and situations where they won't. We hope to explore more direct comparisons between MFG and BRS dynamics in future work.
    
\appendix

\section*{Appendix}

\section{Differentiability in the space of measures} \label{sec:meas}

	In this section, we will outline some key definitions and results about the space of measures, and in particular differentiability in the space of measures. This material is not new and is sourced from \cite{Delarue2018,Cardaliaguet2015a,Ambrosio2005,Cardaliaguet2010}. Throughout this section, $(E,d)$ will be any complete separable metric space, $\mu$ and $\nu$ will be probability measures on $(E,d)$ and $\Gamma(\mu,\nu) := \{ \gamma \in \mathcal{P}(E^2) : \forall A \in \mathcal{B}(E), \, \gamma(A \times E) = \mu(A) \text{ and } \gamma(E \times A) = \nu(A) \}$, where $\mathcal{P}(E)$ is the set of probability measures on $E$ and $\mathcal{B}(E)$ is the Borel sigma algebra on E.
    
    \vspace{\parskip}
    
    \begin{defn} \label{def:Ws}
    	For any $p \geq 1$, the set of probability measures of order $p$, denoted by $\mathcal{P}_p(E) \subset \mathcal{P}(E)$, is the set of $\mu \in \mathcal{P}(E)$ such that for any $y \in E$
        
        \[\int_E (d(x,y))^p \mu(dx) < \infty.\]
    \end{defn}
    
    \vspace{\parskip}
    
    \begin{defn} \label{def:Wd}
    	Let $\mu, \nu \in \mathcal{P}_p(E)$ and let $p \geq 1$. The $p$-Wasserstein distance between $\mu$ and $\nu$, denoted by $\mathcal{W}_p(\mu,\nu)$ is defined as
        
        \[\mathcal{W}_p(\mu,\nu) = \inf_{\gamma \in \Gamma(\mu,\nu)} \left\{ \left[ \int_{E^2} (d(x,y))^p \gamma(dx,dy) \right]^{1/p} \right\}.\]
        
	\end{defn}
        
	With this distance defined, $\mathcal{P}_p(E)$ is a metric space. Definitions \ref{def:Ws} and \ref{def:Wd} can be equivalently reformulated in terms of random variables as follows. We now let $(\Omega, \mathcal{F},\mathbb{P})$ be an atomless probability space.
    
    \vspace{\parskip}
    \begin{rmk}
        Definitions \ref{def:Ws} and \ref{def:Wd} can be restated in a complementary form using random variables. For example $\mathcal{P}_p(E)$ can be defined as the set of $\mu \in \mathcal{P}(E)$ such that for any $y \in E$ and any random variable $X:\Omega \to E$ with law $\mathcal{L}(X) = \mu$
        
        \[\E[(d(X,y))^p] < \infty\].
        
      Similarly, $\mathcal{W}_p(\mu,\nu)$ can be defined as
        
        \[\mathcal{W}_p(\mu,\nu) = \inf \left\{ \E [(d(X,Y))^p]^{1/p} : \mathcal{L}(X) = \mu, \, \mathcal{L}(Y) = \nu	\right\}.\]
    \end{rmk}
    
    The reason for defining $\mathcal{P}_p(E)$ and $\mathcal{W}_p$ in this alternative way is that it is often easier to work with random variables than it is to work with measures directly. It is important to note that since $E$ is a Polish space (complete and separable) and $\Omega$ is atomless, it is always possible to find such an $X$ with law $\mathcal{L}(X) = \mu$. Now that we have put a metric space structure onto the set of probability measures, the next step is to define differentiability of functions with respect to measure. There are several overlapping, but not equivalent, definitions of differentiability in the space of measures. In this appendix we will discuss L-differentiability, as defined in \cite{Delarue2018}, and the notion of a functional derivative described in \cite{Cardaliaguet2015a}. There is also another definition of differentiability, defined in \cite{Ambrosio2005}, which is in some sense a more intrinsic definition of differentiability however it is less useful to us here and so will be ignored in this appendix. For this section of the appendix, as with the main body of the paper, we will restrict our analysis to focussing on $\mathcal{P}_2(\R^d)$. We will focus on functions $u: \mathcal{P}_2(\R^d) \to \R$ and consider their lift $\tilde{u}:L^2((\Omega, \mathcal{F}, \mathbb{P});\R^d) \to \R$ defined by $\tilde{u}(X) = u(\mathcal{L}(X))$. As previously discussed, it is always possible to find such a random variable $X$ given a measure $\mu$.
    
    \vspace{\parskip}
    
    \begin{defn} \label{def:Ldiff}
    	Let $u$ and $\tilde{u}$ be as defined above. $u$ is (continuously) L-differentiable at $\mu \in \mathcal{P}_2(\R^d)$ if there exists a random variable $X \in L^2((\Omega, \mathcal{F}, \mathbb{P});\R^d)$ such that $\tilde{u}$ is differentiable in the usual Fréchet sense at $X$ (or continuously differentiable in an open neighbourhood of $X$ in the case of continuously L-differentiable). Note that in this case, we consider $D\tilde{u}(X) \in L^2((\Omega, \mathcal{F}, \mathbb{P});\R^d)$ by associating this space with its dual. 
    \end{defn}
    
    It is not clear straight away that the above definition of differentiability is independent of the choice of $X$, however the next two propositions (to be found, along with proofs, in \cite{Delarue2018}) show that this is indeed the case and, under certain circumstances the derivative can be uniquely described by the measure $\mu$.
    
    \vspace{\parskip}
    
    \begin{prop} \label{prop:Ldiff}
    Let $u$ and $\tilde{u}$ be as defined previously. Suppose $u$ is L-differentiable at $\mu \in \mathcal{P}_2(\R^d)$. Then, for all $X \in L^2((\Omega, \mathcal{F}, \mathbb{P});\R^d)$ such that $\mathcal{L}(X) = \mu$, $\tilde{u}$ is differentiable at $X$ and the law of $(X,D\tilde{u}(X))$ is independent of the $X$ chosen so that $\mathcal{L}(X) = \mu$.
    \end{prop}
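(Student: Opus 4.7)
The plan is to exploit the fact that the lift $\tilde{u}$ is invariant under composition with measure-preserving transformations: $\tilde{u}(X \circ \tau) = \tilde{u}(X)$ for every measure-preserving $\tau: \Omega \to \Omega$, because $X \circ \tau$ has the same law as $X$. Since $Y \mapsto Y \circ \tau$ is a linear isometry of $L^2((\Omega,\mathcal{F},\mathbb{P});\R^d)$, the chain rule will both transport Fréchet differentiability between same-law random variables and dictate precisely how the derivative transforms under such a $\tau$.

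The first step is to establish the transformation rule: if $\tilde{u}$ is Fréchet differentiable at some $X_0$ with $\mathcal{L}(X_0) = \mu$ and derivative $Z_0 := D\tilde{u}(X_0) \in L^2$, then for every invertible measure-preserving $\tau$, $\tilde{u}$ is differentiable at $X_0 \circ \tau$ with derivative $Z_0 \circ \tau$. This falls out of differentiating the identity $\tilde{u}(Y \circ \tau) = \tilde{u}(Y)$ at $Y = X_0$ and expressing the $L^2$ pairing in terms of $\tau^{-1}$. Specialising to $\tau$ in the stabiliser of $X_0$ (those with $X_0 \circ \tau = X_0$ a.s.) forces $Z_0 \circ \tau = Z_0$ a.s.\ for every such $\tau$. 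Atomlessness of $\Omega$ then provides enough measure-preserving rearrangements along the fibres of $X_0$ to conclude that $Z_0$ must be $\sigma(X_0)$-measurable, producing a Borel map $\xi: \R^d \to \R^d$ with $Z_0 = \xi(X_0)$ a.s.

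Given any other $X_1 \in L^2$ with $\mathcal{L}(X_1) = \mu$, atomlessness of $\Omega$ (after an innocuous product enlargement that leaves $\tilde{u}$ unchanged, if needed) furnishes a measure-preserving transformation $\tau_1: \Omega \to \Omega$ with $X_1 = X_0 \circ \tau_1$ a.s. Differentiability then transports: $\tilde{u}$ is Fréchet differentiable at $X_1$ with $D\tilde{u}(X_1) = Z_0 \circ \tau_1 = \xi(X_0) \circ \tau_1 = \xi(X_1)$. Hence $(X_1, D\tilde{u}(X_1)) = (X_1, \xi(X_1))$, whose joint law is the pushforward of $\mu$ under $x \mapsto (x, \xi(x))$ and therefore depends only on $\mu$, not on the representative $X_1$.

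The main obstacle is the measurability step — showing that invariance of $Z_0$ under the full stabiliser of $X_0$ forces $Z_0$ to be a Borel function of $X_0$. This is where atomlessness of $\Omega$ is used essentially, through the abundance of measure-preserving rearrangements available on each fibre $\{X_0 \in A\}$; the argument is classical (see, e.g., the relevant sections of \cite{Delarue2018} and \cite{Cardaliaguet2015a}) but is the technically delicate part. Once the representation $Z_0 = \xi(X_0)$ is in hand, producing $\tau_1$ and transporting differentiability are comparatively routine consequences of the same atomlessness together with the Polish structure of $\R^d$.
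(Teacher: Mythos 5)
First, a point of comparison: the paper does not actually prove this proposition — it states explicitly that the proofs of Propositions \ref{prop:Ldiff} and \ref{prop:cLdiff} are to be found in \cite{Delarue2018} — so the only benchmark is the argument given there. Your overall strategy (exploit the invariance $\tilde{u}(Y\circ\tau)=\tilde{u}(Y)$ under measure-preserving $\tau$, differentiate it to get the transformation rule $D\tilde{u}(X_0\circ\tau)=D\tilde{u}(X_0)\circ\tau$ for invertible $\tau$, and transport the derivative from one representative of $\mu$ to another) is exactly the backbone of the proof in that reference, and your first step is correct as stated.

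There are, however, two genuine gaps. First, the exact transport in your last step fails: on an atomless space there need not exist \emph{any} measure-preserving $\tau_1$ with $X_1=X_0\circ\tau_1$. Take $\Omega=[0,1]$ with Lebesgue measure, $X_0(\omega)=2\omega \bmod 1$ and $X_1(\omega)=\omega$; any $\tau$ with $X_0\circ\tau=X_1$ must send $\omega$ into $\{\omega/2,\;\omega/2+1/2\}$, and for every measurable selection of these two branches the pushforward of Lebesgue measure has a density taking only the values $0$ and $2$, so $\tau$ is never measure-preserving. The cited proof circumvents this with an \emph{approximate} transfer lemma (measure-preserving bijections $\tau_n$ with $\|X_1-X_0\circ\tau_n\|_2\to 0$) combined with a uniformly controlled first-order expansion at $X_0$ and a tightness/identification argument for the laws of $(X_0\circ\tau_n,D\tilde{u}(X_0)\circ\tau_n)$. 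Your alternative — a product enlargement $\Omega\times[0,1]$ — can be made to work, but then you must additionally prove that Fréchet differentiability of the lift transfers between the original and the enlarged space; composition with the projection is an isometry \emph{into}, not onto, $L^2$, so this is not a consequence of the chain rule, and you do not address it.

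Second, your measurability step claims that invariance of $Z_0$ under the stabiliser of $X_0$ forces $Z_0=\xi(X_0)$. On the original space the stabiliser can be essentially trivial: if the conditional law of $\omega$ given $X_0$ carries atoms of unequal mass on some fibre, no measure-preserving rearrangement can permute them, so stabiliser-invariance does not yield $\sigma(X_0)$-measurability. More to the point, the representation $D\tilde{u}(X)=\xi(X)$ at a single point of L-differentiability is a strictly harder theorem than what is being asked — it is essentially the content of Proposition \ref{prop:cLdiff}, which the paper (following \cite{Delarue2018}) only asserts under \emph{continuous} L-differentiability. The present proposition requires only that the \emph{law} of $(X,D\tilde{u}(X))$ be independent of the representative, and the reference obtains this directly from the approximate transfer without ever producing $\xi$. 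So your route attempts to prove more than is needed by an argument that is not justified as written, while the weaker statement actually required can be reached without it.
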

     
    \vspace{\parskip}
    
    \begin{prop} \label{prop:cLdiff}
    Let $u$ and $\tilde{u}$ be as defined previously. Suppose $u$ is everywhere continuously L-differentiable. Then, for every $\mu \in \mathcal{P}_2(\R^d)$, there exists a deterministic, measurable function $\xi:\R^d \to \R^d$ such that for all $X \in L^2((\Omega, \mathcal{F}, \mathbb{P});\R^d)$ with $\mathcal{L}(X) = \mu$ we have $D\tilde{u}(X)(\omega) = \xi(X(\omega))$ for almost every $\omega \in \Omega$.
    \end{prop}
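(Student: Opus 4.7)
The strategy is to exploit the invariance of $\tilde{u}$ under measure-preserving transformations of $\Omega$ and then invoke Proposition \ref{prop:Ldiff}. First I would fix $\mu \in \mathcal{P}_2(\R^d)$ and any $X \in L^2((\Omega, \mathcal{F}, \mathbb{P}); \R^d)$ with $\mathcal{L}(X) = \mu$, and consider an arbitrary measure-preserving bijection $\phi: \Omega \to \Omega$ with measurable inverse. Composition with $\phi$ defines a linear isometry $\psi_\phi: Y \mapsto Y \circ \phi$ of $L^2$ whose Fréchet differential is itself, and the equality $\tilde{u}(Y \circ \phi) = u(\mathcal{L}(Y)) = \tilde{u}(Y)$ holds for every $Y$ since $\phi$ preserves laws. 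Applying the Fréchet chain rule in the neighbourhood of $X$ where $\tilde{u}$ is continuously differentiable and then using $\phi$-invariance of $\mathbb{P}$ to move $\phi$ across the inner product produces the equivariance identity $D\tilde{u}(X \circ \phi) = D\tilde{u}(X) \circ \phi$ as elements of $L^2(\Omega; \R^d)$.

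Next I would restrict attention to the subgroup $G_X$ of measure-preserving bijections satisfying $X \circ \phi = X$ almost surely. For $\phi \in G_X$ the equivariance reduces to $D\tilde{u}(X) \circ \phi = D\tilde{u}(X)$, showing that $D\tilde{u}(X)$ lies in the fixed-point set of the action of $G_X$ on $L^2(\Omega; \R^d)$. The key measure-theoretic lemma is that on an atomless standard probability space the $\sigma$-algebra of $G_X$-invariant events coincides (up to null sets) with $\sigma(X)$. Granting this lemma, $D\tilde{u}(X)$ is $\sigma(X)$-measurable and the Doob-Dynkin factorisation yields a Borel map $\xi_X: \R^d \to \R^d$ with $D\tilde{u}(X)(\omega) = \xi_X(X(\omega))$ almost surely.

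To conclude that $\xi_X$ depends only on $\mu$ rather than on the particular $X$ chosen, I would invoke Proposition \ref{prop:Ldiff}: for any other $X'$ with law $\mu$, the joint laws of $(X, D\tilde{u}(X)) = (X, \xi_X(X))$ and $(X', D\tilde{u}(X')) = (X', \xi_{X'}(X'))$ coincide, and disintegrating this common joint law against its first marginal $\mu$ forces $\xi_X = \xi_{X'}$ on a set of full $\mu$-measure. Redefining $\xi_X$ on a $\mu$-null set if necessary yields a single measurable $\xi = \xi_\mu: \R^d \to \R^d$ with $D\tilde{u}(X)(\omega) = \xi(X(\omega))$ almost surely for every $X$ with law $\mu$.

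The main obstacle is verifying the measure-theoretic lemma about $G_X$-invariant events. The underlying idea is that, because $(\Omega, \mathcal{F}, \mathbb{P})$ is atomless, a regular conditional probability $\mathbb{P}(\cdot \mid X = x)$ exists and is itself atomless for $\mu$-almost every $x$. On each such fibre one can construct nontrivial measure-preserving bijections, and gluing these fibrewise bijections together produces elements of $G_X$ under which any bounded random variable not measurable with respect to $\sigma(X)$ fails to be invariant. Making this construction rigorous requires a careful disintegration argument together with the standardness of the probability space, and this is the principal technical step; working in the Polish setup of \cite{Delarue2018} is precisely what makes the necessary regular conditional probabilities available.
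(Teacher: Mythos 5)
The paper itself gives no proof of this proposition --- it explicitly defers to \cite{Delarue2018} --- so your argument has to be judged on its own merits. Your opening move is correct: for a measure-preserving bijection $\phi$ the map $\psi_\phi:Y\mapsto Y\circ\phi$ is a linear isometry of $L^2$, $\tilde{u}\circ\psi_\phi=\tilde{u}$, and the chain rule plus a change of variables gives $D\tilde{u}(X\circ\phi)=D\tilde{u}(X)\circ\phi$; the closing transfer step via Proposition \ref{prop:Ldiff} is also sound. The genuine gap is your ``key measure-theoretic lemma'': it is \emph{false} that for every $X$ on an atomless standard space the $G_X$-invariant $\sigma$-field coincides with $\sigma(X)$ up to null sets, and the sub-claim that the conditional laws $\mathbb{P}(\cdot\mid X=x)$ are atomless for $\mu$-a.e.\ $x$ is also false. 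Concretely, take $\Omega=[0,1]\times\{0,1\}$ with $\mathbb{P}=\mathrm{Leb}\otimes(\frac{1}{3}\delta_0+\frac{2}{3}\delta_1)$ (a standard, atomless probability space) and $X(\omega,i)=\omega$. The fibre over $\omega$ carries the conditional law $\frac{1}{3}\delta_{(\omega,0)}+\frac{2}{3}\delta_{(\omega,1)}$, purely atomic with unequal masses. If $\tau\in G_X$ swaps the two points of the fibre for $\omega$ in a set $S$, then $\tau^{-1}(S\times\{1\})=S\times\{0\}$, and measure preservation forces $\frac{1}{3}\mathrm{Leb}(S)=\frac{2}{3}\mathrm{Leb}(S)$, i.e.\ $\mathrm{Leb}(S)=0$. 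Hence $G_X$ is trivial modulo null sets, every element of $L^2(\Omega;\R^d)$ is $G_X$-invariant, yet $(\omega,i)\mapsto i$ is not $\sigma(X)$-measurable. For such an $X$ the invariance $D\tilde{u}(X)\circ\phi=D\tilde{u}(X)$ carries no information and Doob--Dynkin cannot be invoked, so the argument as written does not close.

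The repair is to reorder your steps and use the transfer argument first rather than last. By Proposition \ref{prop:Ldiff} it suffices to prove $D\tilde{u}(X)=\xi(X)$ for a \emph{single} well-chosen representative $X$ of $\mu$: if the law of $(X,D\tilde{u}(X))$ is carried by the graph of $\xi$, then so is the law of $(X',D\tilde{u}(X'))$ for any other $X'$ with $\mathcal{L}(X')=\mu$, which is exactly the desired conclusion for $X'$ (this is stronger than your version, which assumes both representatives already admit factorisations). One can then choose a representative whose positive-mass fibres carry atomless conditional laws --- for instance a quantile-type coupling on $[0,1]$ sending an interval of length $\mu(\{a\})$ onto each atom $a$ of $\mu$ --- for which your fibre-permutation argument does go through; depending on how that representative is built, an $L^2$-approximation step exploiting the assumed \emph{continuity} of the L-derivative (which your sketch never actually uses beyond justifying the chain rule) is also needed, and this is how the cited source proceeds.
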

    
    The importance of these two propositions, as explained in \cite{Delarue2018}, are as follows. Proposition \ref{prop:Ldiff} means that differentiability with respect to a measure $\mu$ depends only on $\mu$ and not on the particular random variable chosen to represent the derivative. Proposition \ref{prop:cLdiff} states that if there is some further regularity in the differentiability, then not only is the L-derivative independent of the random variable, it is of the form $\xi(X)$ for some deterministic function $\xi$ which is uniquely defined almost everywhere. Due to this uniqueness property, we can then define the L-derivative of $u$ as follows
    
    \vspace{\parskip}
    
    \begin{defn}
    	 Let $u$ and $\tilde{u}$ be as defined previously. Suppose $u$ is continuously L-differentiable and $\xi$ is as in Proposition \ref{prop:cLdiff}. Then, the L-derivative of $u$ at $\mu \in \mathcal{P}_2(\R^d)$, denoted by $\partial_m u(\mu)$ is defined as the equivalence class of $\xi$ in $L^2((\R^d,\mu);\R^d)$. This is defined uniquely since $\xi$ is defined uniquely almost everywhere with respect to $\mu$.
    \end{defn}
    
    Note that since $\partial_m u(\mu)$ is an equivalence class of functions from $\R^d$ to $\R^d$, it can be identified with a function $\partial_m u(\mu)(\cdot): \R^d \to \R^d$ without ambiguity. We shall often consider $\partial_mu(\mu)$ in such a way without explicit reference to this form. As mentioned near the beginning of this appendix, the notion of the functional derivative of a function with respect to a measure will also be a widely used notion for us in this paper. We will now define what this notion is and link it to the previous definition of the L-derivative. The following definition is attributed to \cite{Cardaliaguet2015a}.
    
    \vspace{\parskip}
    
    \begin{defn}
    	Let $u:\mathcal{P}_2(\R^d) \to \R^d$. We call $u$ a $\mathcal{C}^1$ function if there exists some function $\frac{\delta u}{\delta m}: \mathcal{P}_2(\R^d) \times \R^d \to \R^d$ such that for all $\mu, \nu \in \mathcal{P}_2(\R^d)$ the following holds.
        
        \begin{equation} \label{eq:fundiv}
        	\lim_{s \to 0^+} \frac{u((1-s) \mu + s \nu) - u(\mu)}{s} = \int_{\R^d} \frac{\delta u}{\delta m}(\mu)(y) (\nu - \mu)(dy).
        \end{equation}

        Here, $\frac{\delta u}{\delta m}(\mu)$ is defined up to a constant, so the normalisation condition $\int_{\R^d} \frac{\delta u}{\delta m}(\mu)(y) \mu(dy) = 0$ is taken. The requirement \eqref{eq:fundiv} can easily be seen as equivalent to the following requirement, which is the requirement used in Section \ref{sec:mfe}.
    
    \[u(\nu) - u(\mu) = \int_0^1 \int_{\R^d} \frac{\delta u}{\delta m}(s\nu - (1-s) \mu)(y) (\nu - \mu)(dy) ds.\]
    
    \end{defn}
    
    These two notions of derivative have a simple relationship to each other, as explained in Propositions \ref{prop:Dequiv1} and \ref{prop:Dequiv2} below (see \cite{Cardaliaguet2015a} and \cite{Delarue2018} respectively for original statements and proofs).
    
    \vspace{\parskip}
    
    \begin{prop} \label{prop:Dequiv1}
    	Let $u:\mathcal{P}_2(\R^d) \to \R$ be $\mathcal{C}^1$. Assume further that the function $\frac{\delta u}{\delta m}(\mu)(\cdot):\R^d \to \R^d$ is continuously differentiable for any $\mu \in \R^d$. Then $u$ is L-differentiable and we have
        
        \begin{equation} \label{eq:measure_equivalence}
        	\partial_m u(\mu)(x) = D_x \frac{\delta u}{\delta m}(\mu)(x).
        \end{equation}

    \end{prop}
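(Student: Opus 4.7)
The plan is to work through the lift $\tilde{u}(X) = u(\mathcal{L}(X))$ and show directly that it is Fr\'echet differentiable at any $X \in L^2((\Omega,\mathcal{F},\mathbb{P});\R^d)$ with $\mathcal{L}(X) = \mu$, with derivative represented by the random variable $D_x \frac{\delta u}{\delta m}(\mu)(X)$. Once this is established, Proposition \ref{prop:cLdiff} identifies the equivalence class $\partial_m u(\mu)$ with the deterministic function $x \mapsto D_x \frac{\delta u}{\delta m}(\mu)(x)$, which is exactly \eqref{eq:measure_equivalence}.

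The core computation is a one-parameter interpolation along the straight line in $L^2$. For $Y \in L^2$, set $\mu_s := \mathcal{L}(X + sY)$ and $g(s) := u(\mu_s) = \tilde{u}(X + sY)$ for $s \in [0,1]$. First I would fix $s$ and apply the integrated form of the functional derivative with $\mu = \mu_s$ and $\nu = \mu_{s+h}$, obtaining
\begin{equation}
g(s+h) - g(s) = \int_0^1 \int_{\R^d} \frac{\delta u}{\delta m}\bigl((1-r)\mu_s + r \mu_{s+h}\bigr)(y)\,(\mu_{s+h} - \mu_s)(dy)\,dr.
\end{equation}
The next step is to rewrite the signed measure $\mu_{s+h} - \mu_s$ as an expectation: for any $\phi \in C^1$ with suitable growth,
\begin{equation}
\int_{\R^d} \phi(y)(\mu_{s+h} - \mu_s)(dy) = \mathbb{E}\bigl[\phi(X + (s+h)Y) - \phi(X + sY)\bigr] = \mathbb{E}\!\left[\int_s^{s+h} D\phi(X + tY) \cdot Y\,dt\right],
\end{equation}
applied with $\phi = \frac{\delta u}{\delta m}\bigl((1-r)\mu_s + r\mu_{s+h}\bigr)(\cdot)$, which is $C^1$ by hypothesis. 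Dividing by $h$ and sending $h \to 0$, using continuity of $D_x \frac{\delta u}{\delta m}$ in both arguments, yields
\begin{equation}
g'(s) = \mathbb{E}\!\left[D_x \frac{\delta u}{\delta m}(\mu_s)(X + sY) \cdot Y\right].
\end{equation}

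To upgrade the resulting Gateaux derivative at $s=0$ to a Fr\'echet derivative, I would integrate $g'$ over $[0,1]$ to write
\begin{equation}
\tilde{u}(X + Y) - \tilde{u}(X) - \mathbb{E}\!\left[D_x \tfrac{\delta u}{\delta m}(\mu)(X) \cdot Y\right] = \int_0^1 \mathbb{E}\!\left[\bigl(D_x \tfrac{\delta u}{\delta m}(\mu_s)(X + sY) - D_x \tfrac{\delta u}{\delta m}(\mu)(X)\bigr) \cdot Y\right] ds,
\end{equation}
then apply Cauchy--Schwarz to bound the right-hand side by $\|Y\|_{L^2}$ times the $L^2$-norm of the bracketed difference, which tends to $0$ as $\|Y\|_{L^2} \to 0$ by the assumed joint continuity of $D_x \frac{\delta u}{\delta m}$ and the fact that $\mu_s \to \mu$ in $\mathcal{W}_2$. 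This proves $\tilde{u}$ is Fr\'echet differentiable at $X$ with $D\tilde{u}(X) = D_x \frac{\delta u}{\delta m}(\mu)(X)$, so $u$ is L-differentiable at $\mu$.

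The main obstacles I anticipate are two. First, the manipulation converting $\mu_{s+h} - \mu_s$ into an expectation along the interpolation path requires enough integrability on $D_x \frac{\delta u}{\delta m}$ to use Fubini and the dominated convergence theorem when passing $h \to 0$ inside both the $r$-integral and the expectation; this is the place where the precise growth hypotheses implicit in the $\mathcal{C}^1$ assumption are used. Second, the passage from a Gateaux-type statement to the Fr\'echet condition needs the continuity of $D_x \frac{\delta u}{\delta m}$ in the measure argument with respect to $\mathcal{W}_2$, together with an equi-integrability argument so that $\|D_x \frac{\delta u}{\delta m}(\mu_s)(X + sY)\|_{L^2}$ remains controlled uniformly in $s \in [0,1]$ for small $Y$; this is the routine but slightly delicate part, and is where the assumption that $\frac{\delta u}{\delta m}(\mu)(\cdot)$ is continuously differentiable for \emph{every} $\mu$ (not just at one) pays off.
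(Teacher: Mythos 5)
The paper does not actually prove this proposition itself---it defers to \cite{Cardaliaguet2015a} and \cite{Delarue2018} for the statement and proof---and your argument is essentially the standard one found there: interpolate along $X+sY$ in the lifted $L^2$ space, use the integrated form of the functional derivative to compute $g'(s)=\mathbb{E}[D_x\tfrac{\delta u}{\delta m}(\mu_s)(X+sY)\cdot Y]$, and upgrade from Gateaux to Fr\'echet via continuity of $D_x\tfrac{\delta u}{\delta m}$ and Cauchy--Schwarz. The only caveat is the one you already flag yourself: as stated, the proposition omits the growth/boundedness hypotheses on $\tfrac{\delta u}{\delta m}$ and $D_x\tfrac{\delta u}{\delta m}$ (joint continuity in $(\mu,y)$ and, e.g., at most linear growth of the spatial gradient) that are needed to justify Fubini and dominated convergence in the limit $h\to 0$ and to guarantee $D_x\tfrac{\delta u}{\delta m}(\mu)(X)\in L^2$, so these must be read as implicit in the $\mathcal{C}^1$ definition, exactly as in the cited references.
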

    
    \vspace{\parskip}
    
    \begin{prop} \label{prop:Dequiv2}
    	Let $u:\mathcal{P}_2(\R^d) \to \R$ be L-differentiable. Assume further that the Fréchet derivative of $\tilde{u}$ is Lipschitz and that for all $\mu \in \mathcal{P}_2(\R^d)$ there is a representative $\partial_m u(\mu)(\cdot)$ such that $\partial_m u: \mathcal{P}_2(\R^d) \times \R^d \to \R^d$ is continuous. Then $u$ is $\mathcal{C}^1$ and satisfies \eqref{eq:measure_equivalence}

    \end{prop}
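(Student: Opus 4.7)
The assertion is the converse of Proposition \ref{prop:Dequiv1}: starting from the L-derivative we must produce a functional derivative satisfying both the limit property \eqref{eq:fundiv} and the identity \eqref{eq:measure_equivalence}. The plan is first to construct a candidate $\frac{\delta u}{\delta m}(\mu)(\cdot)$ as a line integral of $\partial_m u(\mu)(\cdot)$ in the spatial variable, which makes \eqref{eq:measure_equivalence} automatic via the fundamental theorem of calculus, and then to verify \eqref{eq:fundiv} by realising the convex interpolation $\mu_s := (1-s)\mu + s\nu$ as the law of a Bernoulli mixture of random variables and applying the integral form of Taylor's theorem to the lift $\tilde{u}$.

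\textbf{Construction.} For $\mu \in \mathcal{P}_2(\R^d)$ and $y \in \R^d$, set
\[ G(\mu, y) := \int_0^1 \partial_m u(\mu)(ry) \cdot y \, dr, \qquad \frac{\delta u}{\delta m}(\mu)(y) := G(\mu, y) - \int_{\R^d} G(\mu, z) \, \mu(dz). \]
The normalisation enforces $\int \frac{\delta u}{\delta m}(\mu)(z)\,\mu(dz) = 0$, while the assumed continuity of $\partial_m u$ and the fundamental theorem of calculus give $D_y G(\mu, y) = \partial_m u(\mu)(y)$, hence $D_y \frac{\delta u}{\delta m}(\mu)(y) = \partial_m u(\mu)(y)$, which is \eqref{eq:measure_equivalence}. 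For this line-integral definition to be intrinsic (independent of the chosen basepoint and of the straight-line path), one needs $\partial_m u(\mu)(\cdot)$ to be a conservative field on $\R^d$; this follows from the law-invariance of $\tilde{u}$ under measure-preserving rearrangements of the underlying probability space (in particular from the deterministic form of the L-derivative given by Proposition \ref{prop:cLdiff}).

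\textbf{Verification of \eqref{eq:fundiv}.} Pick mutually independent $X \sim \mu$, $Y \sim \nu$, and $B_s$ Bernoulli with parameter $s$ on $(\Omega, \mathcal{F}, \mathbb{P})$. The mixture $Z_s := (1 - B_s)X + B_s Y$ has law $\mu_s$, so $u(\mu_s) - u(\mu) = \tilde{u}(Z_s) - \tilde{u}(X)$. Using the integral remainder form of Taylor's theorem and $Z_s - X = B_s(Y-X)$,
\[ \tilde{u}(Z_s) - \tilde{u}(X) = \int_0^1 \mathbb{E}\bigl[ D\tilde{u}\bigl(X + r B_s(Y-X)\bigr) \cdot B_s (Y-X) \bigr] \, dr. \]
Conditioning on $\{B_s = 1\}$ (the complementary event contributes zero because of the $B_s$ factor) and using $D\tilde{u}(Z) = \partial_m u(\mathcal{L}(Z))(Z)$, this equals
\[ s \int_0^1 \mathbb{E}\bigl[ \partial_m u(\nu_{r,s})\bigl((1-r)X + rY\bigr) \cdot (Y - X) \bigr] \, dr, \]
where $\nu_{r,s} := \mathcal{L}(X + rB_s(Y-X)) = (1-s)\mu + s\,\mathcal{L}((1-r)X + rY)$ converges weakly to $\mu$ as $s \to 0$. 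Dividing by $s$ and passing to the limit, via the continuity of $\partial_m u$ in the measure argument together with dominated convergence (justified by the Lipschitz bound on $D\tilde{u}$), one obtains
\[ \lim_{s \to 0^+} \frac{u(\mu_s) - u(\mu)}{s} = \int_0^1 \mathbb{E}\bigl[ \partial_m u(\mu)\bigl((1-r)X + rY\bigr) \cdot (Y - X) \bigr] \, dr. \]
For each realisation of $(X,Y)$ the inner integral is the straight-line integral of $\partial_m u(\mu)$ from $X$ to $Y$, equal to $G(\mu, Y) - G(\mu, X)$ by conservativity; taking expectations and using the independence of $X$ and $Y$ collapses this to $\int \frac{\delta u}{\delta m}(\mu)(y)(\nu - \mu)(dy)$, which is \eqref{eq:fundiv}.

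\textbf{Main obstacle.} The principal technical difficulty is controlling the interchange of limit and integral when passing $s \to 0^+$: the Bernoulli mixture path is discontinuous in $s$ in the $L^2$ sense, so the naive first-order Fréchet expansion produces only an $o(\sqrt{s})$ remainder, which is insufficient to extract the $O(s)$ signal. The Lipschitz hypothesis on $D\tilde{u}$ is precisely what upgrades this to an $O(s)$ pointwise remainder inside the expectation, enabling dominated convergence. A subsidiary issue is establishing path-independence of the line integral, i.e.\ conservativity of $\partial_m u(\mu)(\cdot)$, which relies on the law-invariance of the lift.
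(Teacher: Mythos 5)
The paper does not actually prove this proposition: it is stated as imported from \cite{Delarue2018}, so there is no in-paper argument to compare against. Your strategy — realise the convex interpolation $(1-s)\mu+s\nu$ as the law of a Bernoulli mixture $Z_s=(1-B_s)X+B_sY$, apply the exact integral form of Taylor's theorem to the lift $\tilde{u}$ along the $L^2$ segment from $X$ to $Z_s$, and pass to the limit using joint continuity of $\partial_m u$ with the Lipschitz bound on $D\tilde{u}$ supplying the domination — is the standard route (essentially that of Carmona--Delarue), and that part of your verification is sound. One presentational quibble: in your ``main obstacle'' paragraph, a Lipschitz $D\tilde{u}$ only improves the Taylor remainder to $O(\|Z_s-X\|_2^2)=O(s)$, which after dividing by $s$ is $O(1)$, not $o(1)$; what actually closes the argument is exactly what you do in the verification, namely keeping the exact integral representation and using continuity of $\partial_m u$ in \emph{both} arguments, not a remainder estimate.

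The genuine gap is the conservativity of the vector field $x\mapsto\partial_m u(\mu)(x)$, on which your whole construction rests twice: once so that $D_yG(\mu,y)=\partial_m u(\mu)(y)$ (for a general continuous field $F$, $\nabla_y\int_0^1F(ry)\cdot y\,dr\neq F(y)$), and once so that the segment integral $\int_0^1\partial_m u(\mu)((1-r)X+rY)\cdot(Y-X)\,dr$ collapses to $G(\mu,Y)-G(\mu,X)$. You justify it by appeal to ``law-invariance'' and Proposition \ref{prop:cLdiff}, but that proposition only says $D\tilde{u}(X)=\xi(X)$ for a deterministic $\xi$; it says nothing about $\xi$ being a gradient. (The fact that L-derivatives lie in the closure of gradients in $L^2(\mu)$ is a separate structure theorem, and even it does not immediately yield an exact, globally conservative continuous field on all of $\R^d$.) The fix stays inside your own framework: run the identical Bernoulli computation with $\nu=\delta_y$ and $\nu=\delta_{y'}$ to obtain
\begin{equation*}
\lim_{s\to0^+}\frac{u((1-s)\mu+s\delta_y)-u((1-s)\mu+s\delta_{y'})}{s}=\int_0^1\partial_m u(\mu)\bigl((1-r)y'+ry\bigr)\cdot(y-y')\,dr=:\Lambda(y',y),
\end{equation*}
which exhibits every segment integral $\Lambda(y',y)$ as a difference $\phi(y)-\phi(y')$ of a single function of one point; the cocycle identity $\Lambda(a,b)+\Lambda(b,c)=\Lambda(a,c)$ then gives path-independence, hence conservativity, and differentiating $\Lambda(y',\cdot)$ using continuity of $\partial_m u(\mu)(\cdot)$ gives \eqref{eq:measure_equivalence}. (Alternatively, \eqref{eq:fundiv} alone can be obtained without conservativity by defining $\frac{\delta u}{\delta m}(\mu)(y):=\int_{\R^d}\Lambda(x,y)\,\mu(dx)$ and using the antisymmetry $\Lambda(x,y)=-\Lambda(y,x)$, but \eqref{eq:measure_equivalence} still requires the gradient structure, so the $\delta_y$ argument is needed in any case.)
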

    
    In Section \ref{sec:MFGMPC} and in particular Section \ref{sec:mfe}, we rely heavily on this functional derivative notion, and implicitly use Propositions \ref{prop:Dequiv1} and \ref{prop:Dequiv2} to interchange the definitions. 
    
    \newpage


\begin{thebibliography}{}
   	
   	\bibitem{Achdou2013}[10.1137/120882421]
   	\newblock Y. Achdou, F. Camilli and I. Capuzzo-Dolcetta,
   	\newblock Mean field games: convergence of a finite difference method,
   	\newblock \emph{SIAM J. Numer. Anal.} \textbf{51} (2015) 2585–-2612
   	
   	\bibitem{Achdou2012}[10.1137/100790069]
   	\newblock Y. Achdou, F. Camilli and I. Capuzzo-Dolcetta,
   	\newblock Mean field games: numerical methods for the planning problem,
   	\newblock \emph{SIAM J. Numer. Anal.} \textbf{50} (2012) 77–-109
   	
   	\bibitem{Achdou2016}[10.1137/15M1015455]
   	\newblock Y. Achdou and A. Porretta,
   	\newblock Convergence of a finite difference scheme to weak solutions of the system of partial differential equations arising in mean field games,
   	\newblock \emph{SIAM J. Numer. Anal.} \textbf{54} (2016) 161--186
   	
   	\bibitem{Albi2015}[10.4310/CMS.2015.v13.n6.a3]
   	\newblock G. Albi, M. Herty and L. Pareschi,
   	\newblock Kinetic description of optimal control problems and applications to opinion consensus,
   	\newblock \emph{Commun. Math. Sci.} \textbf{13} (2015) 1407–-1429
   	
   	\bibitem{Ambrosio2005}[10.1007/978-3-7643-8722-8]
   	\newblock L. Ambrosio, N. Gigli and G. Savar\'e,
   	\newblock \emph{Gradient Flows in Metric spaces and in the Space of Probability Measures}
   	\newblock 2$^{nd}$ edition, Birkha\"user, Basel, 2008
   	
   	\bibitem{Aumann1964}[10.2307/1913732]
   	\newblock R. J. Aumann,
   	\newblock Markets with a Continuum of Traders,
   	\newblock \emph{Econometrica} \textbf{32} (1964) 39--50
   	
   	\bibitem{Bardi2014}[10.1137/140951795]
   	\newblock M. Bardi and F. S. Priuli,
   	\newblock Linear-quadratic n-person and mean-field games with ergodic cost,
   	\newblock \emph{SIAM J. Control Optim.} \textbf{52} (2014) 3022--3052
   	
   	\bibitem{Blanchet2014}[10.1098/rsta.2013.0398]
   	\newblock A. Blanchet and G. Carlier,
   	\newblock From Nash to Cournot-Nash equilibria via the Monge-Kantorovich problem,
   	\newblock \emph{Philos. T. R. Soc. A} \textbf{372} (2014)
   	
   	\bibitem{Bouchaud2000}[10.1016/S0378-4371(00)00205-3]
   	\newblock J. P. Bouchaud and M. M\'ezard,
   	\newblock Wealth condensation in a simple model of economy,
   	\newblock \emph{Physica A}, \textbf{282} (2000) 536--545
   	
   	\bibitem{Cardaliaguet2010}
   	\newblock P. Cardaliaguet,
   	\newblock Notes on Mean Field Games,
   	\newblock Preprint
   	
   	\bibitem{Cardaliaguet2015a}
   	\newblock P. Cardaliaguet, F. Delarue, J.-M. Lasry and P.-L. Lions,
   	\newblock The master equation and the convergence problem in mean field games,
   	\newblock Preprint, https://arxiv.org/pdf/1509.02505v1.pdf
   	
   	\bibitem{Cardaliaguet2015}[10.1007/s00030-015-0323-4]
   	\newblock P. Cardaliaguet, P. J. Graber, A. Porretta and D. Tonon,
   	\newblock Second order mean field games with degenerate diffusion and local coupling,
   	\newblock \emph{NODEA-Nonlinear Diff.} \textbf{22} (2015) 1287--1317
   	
   	\bibitem{Carmona2012}[10.1214/ECP.v18-2446]
   	\newblock R. Carmona and F. Delarue,
   	\newblock Mean field forward-backward stochastic differential equations,
   	\newblock \emph{Electron. Commun. Prob.} \textbf{18} (2013) 
   	
   	\bibitem{Carmona2012a}[10.1137/120883499]
   	\newblock R. Carmona and F. Delarue,
   	\newblock Probabilistic analysis of mean-field games,
   	\newblock \emph{SIAM J. Control Optim.} \textbf{51} (2013) 2705–-2734
   	
   	\bibitem{Carmona2016}[10.1214/15-AOP1060]
   	\newblock R. Carmona, F. Delarue and D. Lacker,
   	\newblock Mean field games with common noise,
   	\newblock \emph{Ann. Probab.} \textbf{44} (2016) 3740--3803
   	
   	\bibitem{Degond2017}[10.4310/CMS.2017.v15.n5.a9]
   	\newblock P. Degond, M. Herty and J.-G. Liu,
   	\newblock Meanfield games and model predictive control,
   	\newblock \emph{Commun. Math. Sci.} \textbf{15} (2017) 1403--1422
   	
   	\bibitem{Degond2014}[10.1007/s10955-013-0888-4]
   	\newblock P. Degond, J.-G. Liu and C. Ringhofer,
   	\newblock Evolution of the distribution of wealth in an economic environment driven by local Nash equilibria,
   	\newblock \emph{J. Stat. Phys.} \textbf{154} (2014) 751–-780
   	
   	\bibitem{Degond2014a}[10.1098/rsta.2013.0394]
   	\newblock P. Degond, J.-G. Liu and C. Ringhofer,
   	\newblock Evolution of wealth in a non-conservative economy driven by local Nash equilibria,
   	\newblock \emph{Philos. T. R. Soc. A} \textbf{372} (2014)
   	
   	\bibitem{Degond2014b}[10.1007/s00332-013-9185-2]
   	\newblock P. Degond, J.-G. Liu and C. Ringhofer,
   	\newblock Large-scale dynamics of mean-field games driven by local Nash equilibria,
   	\newblock \emph{J. Nonlinear Sci.} \textbf{24} (2014) 93–-115
   	
   	\bibitem{Delarue2018}[10.1007/978-3-319-58920-6]
   	\newblock F. Delarue and R. Carmona,
   	\newblock \emph{Probabilistic Theory of Mean Field Games with Applications I},
   	\newblock Springer, Cham, 2018
   	
   	\bibitem{During2007}[10.1016/j.physa.2007.05.062]
   	\newblock B. D\"uring and G. Toscani
   	\newblock Hydrodynamics from kinetic models of conservative economies
   	\newblock \emph{Physica A} \textbf{384} (2007) 493--506
   	
   	\bibitem{Feleqi2013ThePlayers}[10.1007/s13235-013-0088-5]
   	\newblock E. Feleqi,
   	\newblock The derivation of ergodic mean field game equations for several populations of players,
   	\newblock \emph{Dyn. Games Appl.} \textbf{3} (2013) 523–-536 
   	
   	\bibitem{Friedman1972}[10.1016/0022-0396(72)90082-4]
   	\newblock A. Friedman,
   	\newblock Stochastic differential games,
   	\newblock \emph{J. Differ. Equations} \textbf{11} (1972) 79--108
   	
   	\bibitem{Herty2017PerformanceDynamics}[10.3934/dcds.2017086]
   	\newblock M. Herty and M. Zanella,
   	\newblock Performance bounds for the mean-field limit of constrained dynamics,
   	\newblock \emph{Discrete Cont. Dyn.--A} \textbf{37} (2017) 2023--2043
   	
   	
   	\bibitem{Huang2007a}[10.1007/s11424-007-9015-4]
   	\newblock M. Huang, P. E. Caines and R. P. Malham\'e,
   	\newblock An invariance principle in large population stochastic dynamic games,
   	\newblock \emph{J. Syst. Sci. Complex.} \textbf{20} (2007) 162--172
   	
   	\bibitem{Huang2006}
   	\newblock M. Huang, P. E. Caines and R. P. Malham\'e,
   	\newblock Distributed multi-agent decision-making with partial observations: asymptotic Nash equilibria,
   	\newblock in \emph{Proceedings of the 17th International Symposium on Mathematical Theory of Networks and Systems} (2006) 2725--2730
   	
   	\bibitem{Huang2007}[10.1109/TAC.2007.904450]
   	\newblock M. Huang, P. E. Caines and R. P. Malham\'e,
   	\newblock Large-population cost-coupled LQG problems with nonuniform agents: Individual-mass behavior and decentralized $\varepsilon$-nash equilibria,
   	\newblock \emph{IEEE T. Automat. Contr.} \textbf{52} (2007) 2725--2730
   	
   	\bibitem{Huang2006b}[10.4310/CIS.2006.v6.n3.a5]
   	\newblock M. Huang, P. E. Caines and R. P. Malham\'e,
   	\newblock Large population stochastic dynamic games: closed-loop McKean-Vlasov systems and the Nash certainty equivalence principle,
   	\newblock \emph{Communications in Information and Systems} \textbf{6} (2006) 221--251
   	
   	\bibitem{Huang2006a}[10.1109/CDC.2006.377683]
   	\newblock M. Huang, P. E. Caines and R. P. Malham\'e,
   	\newblock Nash certainty equivalence in large population stochastic dynamic games: connections with the physics of interacting particle systems,
   	\newblock in \emph{Proceedings of the 45th IEEE Conference on Decision and Control} (2006) 4921--4926
   	
   	\bibitem{Huang2016}[10.1109/CDC.2016.7798958]
   	\newblock M. Huang and S. L. Nguyen,
   	\newblock Mean field games for stochastic growth with relative consumption,
   	\newblock \emph{Appl. Math. Opt.} \textbf{74} (2016) 643--668
   	
   	\bibitem{Jourdain2007}
   	\newblock B. Jourdain, S. M\'el\'eard and W. Woyczynski,
   	\newblock Nonlinear SDEs driven by L\'evy processes and related PDEs
   	\newblock \emph{ALEA--Lat. Am. J. Probab.} \textbf{4} (2008) 1--29
   	
   	\bibitem{Kizilkale2016}[10.1016/B978-0-12-805246-4.00020-3]
   	\newblock A. C. Kizilkale and R. P. Malham\'e,
   	\newblock Collective target tracking mean field control for markovian jump-driven models of electric water heating loads,
   	\newblock in \emph{Control of Complex Systems: Theory and Applications} (eds. K. G. Vamvoudakis and S. Jagannathan) Elsevier (2016) 559--584
   	
   	\bibitem{Kloeden1992}[10.1007/978-3-662-12616-5]
   	\newblock P. E. Kloeden and E. Platen,
   	\newblock \emph{Numerical Solution of Stochastic Differential Equations},
   	\newblock 2$^{nd}$ edition Springer-Verlag, Berlin Heidelberg, 1992
   	
   	\bibitem{Lachapelle2011}[10.1016/j.trb.2011.07.011]
   	\newblock A. Lachapelle and M. -T. Wolfram,
   	\newblock On a mean field game approach modeling congestion and aversion in pedestrian crowds,
   	\newblock \emph{Transport. Res. B--Meth.} \textbf{45} (2011) 1572--1589
   	
   	\bibitem{Lasry2006}[10.1016/j.crma.2006.09.019]
   	\newblock J. M. Lasry and P. L. Lions,
   	\newblock Jeux {\`{a}} champ moyen. I --- Le cas stationnaire,
   	\newblock \emph{C. r. math.} \textbf{343} (2006) 619--625
   	
   	\bibitem{Lasry2006a}[10.1016/j.crma.2006.09.018]
   	\newblock J. M. Lasry and P. L. Lions,
   	\newblock Jeux {\`{a}} champ moyen. II --- Horizon fini et contr\^ole optimal,
   	\newblock \emph{C. R. Math.} \textbf{343} (2006) 679--684
   	
   	\bibitem{Lasry2007}[10.1007/s11537-007-0657-8]
   	\newblock J. M. Lasry and P. L. Lions,
   	\newblock Mean field games,
   	\newblock \emph{Jpn. J. Math.} \textbf{2} (2007) 229--260
   	
   	\bibitem{Mas-Colell1984}[10.1016/0304-4068(84)90029-6]
   	\newblock A. Mas-Colell,
   	\newblock On a theorem of Schmeidler,
   	\newblock \emph{J. Math. Econ.} \textbf{13} (1984) 201--206
   	
   	\bibitem{Mayne1990}[10.1109/9.57020]
   	\newblock D. Q. Mayne and H. Michalska,
   	\newblock Receding horizon control of nonlinear systems,
   	\newblock \emph{IEEE T. Automat. Contr.} \textbf{35} (1990) 814--824
   	
   	\bibitem{Oksendal2007}[10.1007/978-3-642-14394-6]
   	\newblock B. Oksendal,
   	\newblock \emph{Stochastic Differential Equations: An Introduction with Applications},
   	\newblock Springer-Verlag, Berlin Heidelberg, 2003
   	
   	\bibitem{Schmeidler1973}[10.1007/BF01014905]
   	\newblock D. Schmeidler,
   	\newblock Equilibrium points of nonatomic games,
   	\newblock \emph{J. Stat. Phys.} \textbf{7} (1973) 295--300
   	
   	\bibitem{Sznitman1991}
   	\newblock A.-S. Sznitman,
   	\newblock Topics in propagation of chaos,
   	\newblock in \emph{Ecole d'Et\'e de Probabilit\'es de Saint-Flour XIX — 1989} (Ed. P.-L. Hennequin), Springer, Berlin Heidelberg (1991) 165--251 
	
\end{thebibliography}
\end{document}